\definecolor{darkblue}{rgb}{0,0,.4}
\theoremstyle{plain}
\newtheorem{theorem}{Theorem}
\newtheorem{corollary}[theorem]{Corollary}
\newtheorem{lemma}[theorem]{Lemma}
\newtheorem{proposition}[theorem]{Proposition}
\theoremstyle{definition}
\newtheorem{conjecture}[theorem]{Conjecture}
\newcommand{\rci}{\rm rci}
\newcommand{\rc}{\rm rc}
\newcommand{\exc}{{\rm exc}}
\newcommand{\inv}{{\rm inv}}
\newcommand{\des}{{\rm des}}
\newcommand{\crs}{{\rm cr}}
\newcommand{\nes}{{\rm nes}}
\newcommand{\ut}{{\rm ut}}
\newcommand{\lt}{{\rm lt}}
\newcommand{\T}{{\rm T}}
\numberwithin{equation}{section} 
\numberwithin{theorem}{section} 
\title{New combinatorial interpretations of the binomial coefficients}
\author{Paul M. Rakotomamonjy$^1$ and Sandrataniaina R. Andriantsoa$^2$ \\
	\small Department of Mathematics and Computer Science\\[-0.8ex]
	\small Sciences and Technology, PB 906 Antananarivo 101, Madagascar\\[-0.8ex]
	\small \texttt{$^1$rpaulmazoto@gmail.com, $^2$andrian.2sandra@gmail.com}}
\date{}
\begin{document} 
	\maketitle
	\begin{abstract} 
		Using generating functions and some trivial bijections, we show in this paper that the binomial coefficients  count the set of (123,132) and (123,213)-avoiding permutations according to the number of crossings. We also define a q-tableau of power of two and prove that it counts the set of (213,312) and (132,312)-avoiding permutations according to the number of crossings.
		\begin{center}
			\textbf{Keywords:} Binomial coefficients, number of crossings,  restricted permutations, central polygonal numbers, q-tableau of power of two.
			
			\textbf{2010 Mathematics Subject Classification}: 05A20	 and 05A05.
		\end{center}
	\end{abstract}

	
	\section{Introduction and main results}\label{sec1}
	A \textit{statistic} st on a given set $E$ is a map from E to $\mathbb{N}$. We denote by $\sum_{e\in E} x^{{\rm st}(e)}$ the polynomial distribution of the statistic st over the set E. 
	Let $n$ be an integer such that $n\geq 1$. A permutation $\sigma$ of $[n]$ is a bijection from $[n]$ to itself which can be written linearly as $\sigma=\sigma(1)\sigma(2)\ldots \sigma(n)$. We denote by $S_n$ the set of all permutations of $[n]$. 		
	Recall that a \textit{crossing}  in a permutation $\sigma$ is a pair of indexes $(i,j)$ such that $i<j< \sigma(i)<\sigma(j)$  or  $\sigma(i)<\sigma(j)\leq i<j$. A \textit{nesting} of $\sigma$ is similarly defined as a pair $(i,j)$ such that $i<j< \sigma(j)<\sigma(i)$  or  $\sigma(j)<\sigma(i)\leq i<j$. We denote respectively by $\crs(\sigma)$ and $\nes(\sigma)$ the number of crossings and nestings of $\sigma$. We draw arc diagrams of a given permutation $\sigma$ in the figure bellow for illustration of crossings.
	
	\begin{figure}[h]
		\begin{center}
			\begin{tikzpicture}
			\draw[black] (0,1) node {$1\ 2\ 3\ 4\ 5\ 6\ 7$}; 
			\draw (-0.9,1.2) parabola[parabola height=0.2cm,red] (-0,1.2);\draw[->,black] (-0.09,1.25)--(-0,1.2);
			
			\draw (-0.6,1.2) parabola[parabola height=0.3cm,red] (0.85,1.2); \draw[->,black] (0.8,1.25)--(0.85,1.2);
			
			\draw (-0.45,0.7) -- (-0.35,0.85)[rounded corners=0.1cm] -- (-0.25,0.7) -- cycle; 
			
			\draw (-0.03,1.2) parabola[parabola height=0.2cm,red] (0.3,1.2); \draw[->,black] (0.27,1.25)--(0.3,1.2);
			
			\draw (-0.9,0.8) parabola[parabola height=-0.3cm,red] (0.3,0.8); \draw[->,black] (-0.85,0.75)--(-0.9,0.8);
			
			\draw (-0.6,0.8) parabola[parabola height=-0.3cm,red] (0.6,0.8); \draw[->,black] (-0.55,0.75)--(-0.6,0.8);
			
			\draw (0.6,0.8) parabola[parabola height=-0.1cm,red] (0.9,0.8); \draw[->,black] (0.65,0.75)--(0.6,0.8);			
			\end{tikzpicture}
			\caption{Arc diagrams of $\sigma=4735126 \in S_7$ with $\crs(\sigma)=3$ and $\nes(\sigma)=3$.}
			\label{fig:arcdiagproof}
		\end{center}
	\end{figure}
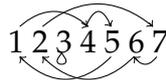

	Let $\sigma\in S_n$ and $\tau \in S_k$ with $1\leq k\leq n$. For a given increasing sequence of integers $i_1<i_2<\ldots <i_k$, we say that the subsequence $s=\sigma(i_1)\sigma(i_2)\ldots \sigma(i_k)$ of $\sigma$ is an occurrence of $\tau$ if $s$ and $\tau$ are in order isomorphic, i.e $\sigma(i_x)<\sigma(i_y)$ if only if $\tau(x)<\tau(y)$. If there is no occurrence of the pattern $\tau$ in $\sigma$, we also say that $\sigma$ is $\tau$-\textit{avoiding}. For example, the permutation $\pi=4162375 \in S_7$ has five occurrences  of 312. These are 312, 423, 623, 625,  and 635. It is obvious to see that $\pi$ is $321$-avoiding. There are three useful trivial involutions on $S_n$ namely \textit{reverse}, \textit{complement} and \textit{inverse} that simplify the search for equivalence classes. For any $\sigma=\sigma(1)\sigma(2)\ldots \sigma(n) \in S_n$,
	\begin{itemize}
		\item the reverse of $\sigma$ is r$(\sigma)=\sigma(n)\sigma(n-1)\ldots \sigma(1)$,
		\item the complement of $\sigma$ is c$(\sigma)=(n+1-\sigma(1))(n+1-\sigma(2))\ldots (n+1-\sigma(n))$,
		\item the inverse of $\sigma$ is i$(\sigma)=p(1) p(2) \ldots p(n)$ where $p(i)$ is the position of $i$ in $\sigma$. We often write i$(\sigma)=\sigma^{-1}$.
	\end{itemize}	
	By composition $\circ$, these involutions generate the dihedral group $\mathcal{D}=\{{\rm id,r,c,i,rc,ri,ci,rci}\}$ where ${\rm fg=f\circ g}$ for any f and g in $\{{\rm r,c,i}\}$. Let us consider $\pi=4135762 \in S_7$. We have r$(\pi)=2675314$, c$(\pi)=4753126$,  $\pi^{-1}=2731465$, rc$(\pi)=6213574$ and rci$(\pi)=3247516$.  We will denote by $S_n(\tau)$ the set of all $\tau$-avoiding permutations of $[n]$.
	For any given subset of patterns $\T$, we write $S_n(\T):=\cap_{\tau \in T}S_n(\tau)$ and $S(\T):=\cup_{n\geq 0}S_n(\T)$.  We say that $\T$ and $\T'$ are Wilf-equivalent if  $|S_n(\T)|=|S_n(\T')|$. 
	For example, we have $|S_n(123)|=|S_n(321)|$ and  $|S_n(312)|=|S_n(231)| = |S_n(132)|=|S_n(213)|$ since 321=r(123), 312=231$^{-1}$, 132=r(231) and 213=r(312).
	In fact, it is well known in  \cite{Knuth} that there is only one Wilf-equivalence class in $S_3$ and we have  $|S_n(\tau)|=\frac{1}{n+1}\binom{2n}{n}$ (the n-th Catalan numbers) for any pattern $\tau\in S_3$. Bijective proof of identity $|S_n(132)|=|S_n(321)|$ interested several author, eg. see \cite{ClaKitaev} and reference therein. It was also shown by Simion and Schmidt \cite{SiS} that there are three Wilf-equivalence classes for two patterns of $S_3$. 
	
	For any given statistic st, the notion of st-wilf-equivalence generalizes that of Wilf-equivalence. Say that two subset of patterns $\T_1$ and $\T_2$ are st-Wilf-equivalent and we write $\T_1=_{{\rm st}}\T_2$ if only if 
	$\sum_{\sigma \in S_n(\T_1)} x^{{\rm st}(\sigma)}=\sum_{\sigma\in S_n(\T_2)} x^{{\rm st}(\sigma)}$. For this notion,  we are much inspired by the work of some author. In \cite{ARob}, Robertson et al. enumerated the set of permutations that avoid a single pattern of length 3 according to the number of fixed points. Instead of considering simply the number of fixed points,  Elizalde 
	\cite{Eliz1,Eliz2} extended the result of Robertson et al. and studied its joint  distribution with the number of excedances. He also treated all pairs and triples patterns of length 3. Dokos et al. \cite{Dokos} provided the number of inversions and major index-Wilf equivalence classes for singleton, double , triple and four patterns of length 3. In this paper, we hope to extend the recent result of the first author \cite{Rakot} of this paper who introduced the study of the number of crossings and nestings on permutations that avoid a single pattern of length 3. He proved bijectively that the three patterns 321, 132 and 213 are \crs-Wilf-equivalent, i.e.
	\begin{equation}\label{eq:res1}
	\sum_{\sigma\in S_n(321)} q^{\crs(\sigma)}=\sum_{\sigma\in S_n(132)} q^{\crs(\sigma)}=\sum_{\sigma\in S_n(213)} q^{\crs(\sigma)}.
	\end{equation}
	To prove the first identity of (\ref{eq:res1}), he exploited the bijection $\Theta:S_n(321)\rightarrow S_n(132)$ of Elizalde and Pak exhibited in \cite{ElizP} and proved that it is \crs-preserving. The second identity of (\ref{eq:res1}) is simply obtained from the fact that the bijection rci preserves also the statistic $\crs$. Notice that the bijection $\Gamma:S_n(321)\rightarrow S_n(132)$ defined by Robertson \cite{ARob2} is also \crs-preserving. Indeed, Saracino discovered recently after his interesting joint work with Bloom \cite{Bloom, Bloom2} that the bijections $\Theta$ and $\Gamma$ are related each other by the simple relation $\Gamma=\Theta\circ\rci$.\\ 
	Using the q,p-Catalan numbers of Randrianarivony \cite{ARandr}, Rakotomamonjy expressed in terms of continued fraction the generating function of $\displaystyle \sum_{\sigma\in S_n(\tau)} q^{\crs(\sigma)}$ for  any $\tau \in \{321,132,213\}$, 
	\begin{equation*}
	\sum_{\sigma\in S(\tau)} q^{\crs(\sigma)}z^{|\sigma|}=\frac{1}{1-\displaystyle\frac{z}{				
			1-\displaystyle\frac{z}{
				1-\displaystyle\frac{qz}{								
					1-\displaystyle\frac{qz}{
						1-\displaystyle\frac{q^2z}{
							1-\displaystyle\frac{q^2z}{				
								\ddots}
						}
				}}		
	}}}. \label{eq:fc}
	\end{equation*}
	Finding $\sum_{\sigma\in S(\tau)} q^{\crs(\sigma)}z^{|\sigma|}$ is staying unsolved for any pattern  $\tau$ in $\{231,312,123\}$. We observe that this continued fraction  appeared in \cite{BEliz} as the distribution of occurrences of generalized patterns in 231-avoiding permutations.We discuss this here for interest readers on the correspondence between these results. View that Corteel \cite{Cort} established the connection between occurrences of patterns, crossings and nestings on permutations.
	
	The first purpose of this paper  is about enumeration result  of (123,132) and (123,213)-avoiding permutations according to the number of crossings. We discover that the binomial coefficients involve on enumeration.  For that, we denote by $S_n^k=\{\sigma \in S_n/ \sigma(n+1-k)=1\}$ the set of permutations of $[n]$ where the position of 1 is $n+1-k$, $1\leq k\leq n$. Moreover, we also denote by $S_{n,k}=\{\sigma \in S_n/ \sigma(n)=k\}$ the set of permutations of $[n]$ ending with $k$. As example, $S_n^1=S_{n,1}=\{\sigma \in S_n/ \sigma(n)=1\}$ is the set of permutations ending with 1.
	\begin{theorem} \label{main1} For all integer $n\geq 2$, we have   
		\begin{equation*}
		\sum_{\sigma \in S_{n}^2(123,132)}q^{\crs(\sigma)} =\sum_{\sigma \in S_{n,2}(123,213)}q^{\crs(\sigma)}= (1+q)^{n-2}.
		\end{equation*}
\end{theorem}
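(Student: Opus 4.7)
The plan is to prove the two equalities in succession. For the first, I would exploit the dihedral bijection $\mathrm{rci}$. The paper has already recalled that $\mathrm{rci}$ preserves the crossing statistic, and a direct pattern check gives $\mathrm{rci}(123)=123$ and $\mathrm{rci}(132)=213$, so $\mathrm{rci}$ restricts to a bijection $S_n(123,132)\to S_n(123,213)$. Computing $\mathrm{rci}(\sigma)(n)=n+1-\sigma^{-1}(1)$ directly from the definition, and using $\sigma(n-1)=1$, yields $\mathrm{rci}(\sigma)(n)=2$; hence $\mathrm{rci}$ sends $S_n^2(123,132)$ bijectively onto $S_{n,2}(123,213)$, and the first equality follows at once.

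For the second equality, I would proceed by induction on $n$ to show $F_n(q):=\sum_{\sigma\in S_n^2(123,132)} q^{\crs(\sigma)}=(1+q)^{n-2}$. The base case $n=2$ reduces to the single permutation $12$ with $\crs=0$. For the inductive step, I would use the classical recursive structure of $(123,132)$-avoiding permutations: each $\sigma\in S_n(123,132)$ decomposes uniquely as $\sigma=\sigma_L\cdot n\cdot\sigma_R$, where $p$ is the position of $n$, $\sigma_L=(n-1)(n-2)\cdots(n-p+1)$ is strictly decreasing, and $\sigma_R$ is a $(123,132)$-avoiding permutation of $\{1,\ldots,n-p\}$. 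The condition $\sigma(n-1)=1$ then forces either $p=n$ (yielding the unique permutation $(n-1)(n-2)\cdots 1\cdot n$ with $\crs=0$) or $1\le p\le n-2$ with $\sigma_R\in S_{n-p}^2(123,132)$.

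With this decomposition the crossings of $\sigma$ split into four disjoint families: those internal to $\sigma_L$ (none, by monotonicity), those between $\sigma_L$ and $\sigma_R$ (none, since every value in $\sigma_L$ strictly exceeds every value in $\sigma_R$), those involving $n$ at position $p$ (a direct count gives $\max(0,\min(p-1,n-p-1))$ first-type crossings), and those internal to $\sigma_R$ read at the shifted absolute positions $p+1,\ldots,n$. The main obstacle is this last family: the shift by $p$ makes the first-type condition $i<j<\sigma(i)<\sigma(j)$ strictly more restrictive and the second-type condition $\sigma(i)<\sigma(j)\le i$ strictly more permissive, so the inductive hypothesis cannot be applied directly to $\sigma_R$. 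I would resolve this by iterating the decomposition down to the deepest level, encoding each $\sigma$ as a composition $(p_1,p_2,\ldots,p_t)$ of $n$ satisfying $p_k\ge 1$, $p_t\ge 2$, and partial sums bounded by $n-2$; a level-by-level count then expresses $\crs(\sigma)$ entirely in terms of the $p_k$ and their cumulative sums $P_k=p_1+\cdots+p_{k-1}$, after which I would verify $\sum_\sigma q^{\crs(\sigma)}=(1+q)^{n-2}$ by establishing the recursion $F_n(q)=(1+q)F_{n-1}(q)$ through a suitable grouping of these compositions.
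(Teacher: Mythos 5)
Your handling of the first equality is correct and coincides with the paper's: $\rci$ is $\crs$-preserving, fixes $123$, sends $132$ to $213$, and carries $S_n^2$ onto $S_{n,2}$; this is precisely relation (\ref{rel21}). The problem is the second equality. You set up the right decomposition $\sigma=\sigma_L\cdot n\cdot\sigma_R$ and, to your credit, you correctly diagnose why the naive induction fails: the crossings inside the block $\sigma_R$ are evaluated at the shifted positions $p+1,\ldots,n$ against the unshifted values $1,\ldots,n-p$, so they do not equal $\crs(\sigma_R)$. But your proposed repair --- iterate the decomposition, encode $\sigma$ as a composition $(p_1,\ldots,p_t)$, express $\crs(\sigma)$ ``level by level'' in terms of the $p_k$ and their partial sums, and then find ``a suitable grouping'' yielding $F_n(q)=(1+q)F_{n-1}(q)$ --- is exactly where all the content of the theorem lives, and none of it is carried out. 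In particular the type-2 crossings $\sigma(i)<\sigma(j)\le i<j$ couple values lying in one block with indices lying in another, so the count is not a sum of independent per-level contributions; until the explicit formula for $\crs$ as a function of $(p_1,\ldots,p_t)$ is written down and the grouping exhibited, the value $(1+q)^{n-2}$ has not been established. As it stands, the second half is a plan rather than a proof.

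For comparison, the paper sidesteps the re-indexing issue entirely. Proposition \ref{prop1} (built on Lemmas \ref{lem21}--\ref{lem24}) provides a bijection $\phi_2\colon S_{n-1}(123,132)\to S_n^2(123,132)$ with $\crs(\phi_2(\sigma))=\crs(\sigma)+1$ except on the single permutation of $S_{n-1}(123,132)$ ending in its maximum (which has no crossings); combined with the closed form $F_{n-1}(\{123,132\};q)=\bigl((1+q)^{n-2}-1+q\bigr)/q$ from Theorem \ref{thm31}, this gives $\sum_{\sigma\in S_n^2}q^{\crs(\sigma)}=qF_{n-1}+1-q=(1+q)^{n-2}$ in one line. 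If you wish to keep your structural route, what remains to be done is concrete: either prove a closed formula for $\crs(\sigma)$ in terms of the composition and deduce the product formula from it, or strengthen the induction hypothesis so that it tracks the distribution of crossings of a block embedded at a prescribed positional offset --- either way a new statement that still requires proof.
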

	\hspace{-0.6cm}To prove this theorem, we first study the fine structure of $S_{n}(123,132)$ in order to find a suitable partition which leads to a recurrence relation for the corresponding polynomial distribution of number of crossings. Notice that similar results as theorem  \ref{main1} on enumeration of restricted permutations refined by number of descents and inversions are found respectively in \cite{Bala,BBES} and \cite{Chung,Dokos}.
	
	For the second purpose of this work, let us consider the q-tableau  $(R_{n}^{k}(q))_{n,k}$ of powers of two defined by the following way
	\begin{equation} \label{q-tableau}	
	\begin{cases}
	R_{n}^n(q)=R_{n}^{n-1}(q)=1 & \\
	R_{n}^{k}(q)=q^{\min\{k-1,n-1-k\}}R_{n-1}^{k}(q)+R_{n}^{k+1}(q) & \text{ if \ \ $0<k<n-1$} \\		
	R_{n}^0(q)=R_{n-1}^{0}(q)+R_{n}^{1}(q) & 
	\end{cases}
	\end{equation}
	It is not difficult to see that $R_{n}^{k}(1)=2^{n-1-k}$ for $0\leq k< n$. Hence, we have $\sum_{k=0}^{n}R_n^k(1)=2^{n}$.  We present here some values for $(R_{n}^k(q))$ in table \ref{table:qtable}.
	\begin{center}
		\begin{table}[h]
			\begin{tabular}{|c|llll|}
				\hline 
				k& 0&1 & 2 & 3  \\
				\hline
				0& $1$&&  &    \\
				1& $1$ &$1$&  &    \\
				2&$2$ &$1$& $1$&  \\
				3& $4$ &$2$& $1$& $1$ \\
				4& $7+q$ &$3+q$& $1+q$&  $1$ \\
				5& $11+4q+q^2$&$4+3q+q^2$& $1+2q+q^2$& $1+q$ \\
				6&$16+9q+5q^2+2q^3$ &$5+5q+4q^2+2q^3$& $1+2q+3q^2+2q^3$& $1+q+q^2+q^3$\\
				\hline 
			\end{tabular}
			\captionof{table}{ Few values of $(R_{n}^k(q))$ for $0\leq n\leq 6$ and $0\leq k\leq 3$} 
			\label{table:qtable} 
		\end{table}
	\end{center}
	If we denote by $S_n^{[k]}=\{\sigma \in S_n/\sigma(n+1-i)=i \text{ for all } i \in [k] \}$ for $1\leq k \leq n$, we have the following result which means that this defined q-tableau counts the set of permutations that avoid some pairs of patterns according to the number of crossings. We obtain it from certain manipulation of the fine structure of $S_n(213,312)$.
	\begin{theorem}\label{main2}	
		Let $\T$ be one of the pairs $\{213,312\}$ and $\{132,312\}$. For all non-negative integer $n$, we have the following identities
		\begin{equation*}\label{eq-main1}
		\sum_{\sigma \in S_n(\T)}q^{\crs(\sigma)}=R_{n}^0(q) \text{ and } 	\sum_{\sigma \in S_n^{[k]}(\T)}q^{\crs(\sigma)}=R_{n}^k(q) \text{ for all } k\geq 1.
		\end{equation*}
	\end{theorem}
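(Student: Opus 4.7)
I will prove Theorem~\ref{main2} by induction on $n$, verifying that for both choices of $\T$ the polynomial $f_n^k(\T,q):=\sum_{\sigma\in S_n^{[k]}(\T)}q^{\crs(\sigma)}$ (with the convention $S_n^{[0]}(\T):=S_n(\T)$) obeys the recurrence~\eqref{q-tableau} defining $R_n^k(q)$. The base cases $R_n^n=R_n^{n-1}=1$ correspond to the single permutation $n(n-1)\cdots 1$, which has no crossing. For each of the two pattern pairs I will (i) prove a structural lemma pinning down the positions available for the value~$1$, (ii) partition $S_n^{[k]}(\T)$ into two subclasses whose generating polynomials match the two summands of the middle recurrence ($0<k<n-1$), and (iii) handle the boundary recurrence for $R_n^0$ by an analogous partition on $\sigma(n)$.

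\textbf{The pair $\{213,312\}$.} A direct pattern check yields the structural lemma: $\sigma\in S_n(213,312)$ forces $\sigma(1)=1$ or $\sigma(n)=1$. Iterating it, $\sigma\in S_n^{[k]}(213,312)$ if and only if the head $\sigma(1)\cdots\sigma(n-k)$, after standardization on $\{k+1,\dots,n\}$, lies in $S_{n-k}(213,312)$. Write $A=\{\sigma\in S_n^{[k]}(\T):\sigma(n-k)=k+1\}=S_n^{[k+1]}(\T)$ and $B$ for its complement; the lemma forces $\sigma(1)=k+1$ for every $\sigma\in B$. Define $\phi:B\to S_{n-1}^{[k]}(\T)$ by deleting position~$1$ and subtracting $1$ from each value exceeding $k+1$. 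The deleted arc is the excedance $1\to k+1$, so only upper crossings touch position~$1$, and such a crossing is a pair $(1,j)$ with $j\in\{2,\dots,\min(k,n-k)\}$; there are exactly $\min\{k-1,n-1-k\}$ eligible $j$'s for every $\sigma\in B$, because $\sigma(j)>k+1$ whenever $j$ lies in the head. All other crossings survive $\phi$ since in the relevant range both indices and values shift down by one, preserving strict inequalities. Combined with $A$ contributing $R_n^{k+1}(q)$ by induction, this yields the middle recurrence.

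\textbf{The pair $\{132,312\}$.} The corresponding structural lemma reads: if $1$ sits at position $j$ of $\sigma\in S_n(132,312)$, then $\sigma(j+1)\sigma(j+2)\cdots\sigma(n)=(j+1)(j+2)\cdots n$ is an increasing block, while $\sigma(1)\cdots\sigma(j-1)$ is a $(132,312)$-avoiding permutation of $\{2,\dots,j\}$; its iteration gives the same head characterization of $S_n^{[k]}(\T)$. For $\sigma\in B$ the forced increasing block inside the head pins down $\sigma(n-k)=n$, so the bijection $\phi:B\to S_{n-1}^{[k]}(\T)$ now deletes position $n-k$ (value $n$), with no relabeling. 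The lost upper crossings are the pairs $(i,n-k)$ with $\sigma(i)\in\{\max(k+1,n-k+1),\dots,n-1\}$, again counted by $\min\{k-1,n-1-k\}$ independently of $\sigma$. The boundary recurrence $R_n^0(q)=R_{n-1}^0(q)+R_n^1(q)$ for either pair follows by partitioning $S_n(\T)$ on $\sigma(n)$: if $\sigma(n)\neq 1$, the structural lemma places a fixed point either at position~$1$ (for $\{213,312\}$) or at position~$n$ (for $\{132,312\}$), and removing this fixed point changes no crossing, contributing $R_{n-1}^0(q)$.

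\textbf{Main obstacle.} The delicate step in both arguments is verifying that $\phi$ preserves every crossing not incident to the deleted position, particularly in the regime $k>n/2$ where some tail positions of $\sigma$ may themselves be excedances and could in principle interact with head arcs under the index shift. The only potentially problematic configuration is a pair $i<n-k<j$ with $\sigma(i)=j$ and $j$ a tail excedance; but $j$ being a tail excedance forces $j<\sigma(j)\leq k$, hence $j\leq k-1$, while $\sigma(i)=j$ being a head value forces $j\geq k+1$, a contradiction. Once this routine check is in place, the induction step closes and Theorem~\ref{main2} follows for both pattern pairs.
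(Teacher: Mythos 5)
Your proof is correct, and for the pair $\{213,312\}$ it is essentially the paper's own argument run in the opposite direction: the paper partitions $S_n^{[k]}(\T)$ into $S_n^{[k+1]}(\T)$ and the set of permutations whose head begins with its minimum $k+1$, realizes the latter as the image of the insertion map $\sigma\mapsto\sigma^{(1,k+1)}$ on $S_{n-1}^{[k]}(\T)$, and shows this insertion creates exactly $\min\{k-1,n-1-k\}$ new crossings; your deletion map $\phi$ on $B$ is precisely the inverse of that insertion, with the same crossing count, and your boundary case ($\sigma(1)=1$ versus $\sigma(n)=1$) is the paper's Proposition 4.1 together with equation (4.1). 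The one genuine divergence is the second pair: the paper gets $\{132,312\}$ for free from the crossing-preserving involution $\rci$, since $\{132,312\}=\rci(\{213,312\})$, whereas you redo the structural analysis from scratch, locating $n$ at position $n-k$ and deleting it. Your route costs an extra round of case-checking but is self-contained and makes the recursive structure of $S_n^{[k]}(132,312)$ explicit, which the $\rci$ argument hides. Two points worth tightening: the induction must run downward on $k$ for each fixed $n$ (the recurrence calls $F_n^{k+1}$ at the same $n$), nested inside the induction on $n$; and in your ``main obstacle'' paragraph the configuration $\sigma(i)=j$ is not itself a crossing (crossings require $j<\sigma(i)$ strictly), so the check you actually need — and which your head-value/tail-value comparison does deliver — is that every crossing of $\sigma$ has both indices among the head positions, where the deletion shifts indices and values uniformly.
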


	So, we organize the rest of this paper as follow. In section \ref{sec2}, we introduce some notations and define some trivial bijections on $S_n$. We also try to use the defined bijections to get a simple relationship between the distribution of the number of crossings over the set $S_n(231)$ and $S_n(312)$. In section \ref{sec3}, we use some tools from the previous one and  establish the proof of theorem \ref{main1}. In section \ref{sec4}, we provide the proof of theorem \ref{main2} and extend it for the polynomial distribution of the number of crossings over the set $S_n(213, 231)$ and $S_n(132, 231)$. We end this paper with enumeration of (321,231)-avoiding permutations according to the number of excedances and crossings  that links the results of Chung et al. and Dokos et al on the distribution of the number of inversions  and descents.

	\section{Trivial bijections and first uses}\label{sec2}
	In this section, we present two trivial bijections that we need in the next sections to prove some identities. Using the following well known property of \cite{SiS} 
	\begin{equation*}
	f(S_n(\T))= S_n(f(\T)) \text{ for any  $f \in \mathcal{D}=\{{\rm id,r,c,i,rc,ri,ci,rci}\}$ and subset of patterns \T},
	\end{equation*}
	we can use our bijections to enumerate avoiding permutations according to the number of crossings. We hope to extend the result of Rakotomamonjy who showed that the trivial involution $\rci$ on $S_n$ is $\crs$-preserving and if $\T'= \rci(\T)$ we have consequently
\begin{equation*}
	\sum_{\sigma \in S_n(\T)} q^{\crs(\sigma)}=\sum_{\sigma \in S_n(\T')} q^{\crs(\sigma)}.
\end{equation*}
We let the reader to show  that we have $S_{n,k}$ = \rci$(S_n^k)$ for all integers $n$ and $k\geq 1$. So, if $\T'= \rci(\T)$, we also have
\begin{equation}\label{rel21}
\sum_{\sigma \in S_n^k(\T)} q^{\crs(\sigma)}=\sum_{\sigma \in S_{n,k}(\T')} q^{\crs(\sigma)}.
\end{equation}
	let us recall some needed notations that Rakotomamonjy has used in \cite{Rakot}. Given a permutation $\sigma$  and two integers $a$ and $b$, we denote by $\sigma^{(a,b)}$ the obtained permutation from $\sigma$ by the following way:
	\begin{itemize}
		\item add by $1$ each number in $\sigma$ which is greater or equal to $b$,
		\item then, insert $b$ at the $a$-th position of the modified $\sigma$.
	\end{itemize}
	To simplify, we write $\sigma^{-(a,b)}$ for $(\sigma^{-1})^{(a,b)}$.  As example, we have $3142^{(2,\textcolor{gray}{3})}=4\textcolor{gray}{3}152$ and $3142^{-(2,\textcolor{gray}{3})}=2\textcolor{gray}{3}514$. If $\sigma \in S_{n}$, we have particularly
	$\sigma^{(n+1,1)}(i)=\sigma(i)+1$  if $i\leq n$ and $\sigma^{(n+1,1)}(n+1)=1$. Let also denote respectively by $$\ut(\sigma):=|\{i/\sigma^{-1}(i)<i<\sigma(i)\}|  \text{ and }	\lt(\sigma):=|\{i/\sigma(i)<i<\sigma^{-1}(i)\}|$$ the number of upper and lower transients of a given permutation $\sigma$. Notice that lower transients of a given permutation $\sigma$ are counted as crossing of $\sigma$. The following lemma even comes from definitions of $\sigma^{(n+1,1)}$.
	\begin{lemma}\label{lem21} 	For any $\sigma \in S_{n}$, we have $\crs(\sigma^{(n+1,1)})=\crs(\sigma)+\ut(\sigma)-\lt(\sigma).$	
	\end{lemma}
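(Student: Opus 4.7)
The plan is to compute the crossings of $\pi := \sigma^{(n+1,1)}$ directly from the definition and compare pair-by-pair with the crossings of $\sigma$. Recall $\pi$ is the permutation of $[n+1]$ with $\pi(i)=\sigma(i)+1$ for $i\leq n$ and $\pi(n+1)=1$. I would split the set of index pairs $(a,b)$ with $a<b$ of $[n+1]$ into two cases: (A) $b\leq n$, and (B) $b=n+1$.

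For case (B), any pair $(i,n+1)$ would need, for an upper crossing, $n+1<\pi(i)<\pi(n+1)=1$, impossible; and for a lower crossing, $\pi(i)<\pi(n+1)=1\leq i$, also impossible. So case (B) contributes nothing, and the entire count $\crs(\pi)$ comes from case (A).

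For case (A), the upper-crossing inequality $i<j<\pi(i)<\pi(j)$ unpacks to $i<j\leq \sigma(i)$ together with $\sigma(i)<\sigma(j)$, which differs from the upper-crossing condition on $\sigma$ (namely $i<j<\sigma(i)<\sigma(j)$) only by admitting the boundary case $j=\sigma(i)$. Setting $j=\sigma(i)$ makes $i=\sigma^{-1}(j)$, and the remaining requirement $\sigma(i)<\sigma(j)$ becomes $j<\sigma(j)$; together with $i<j$ this is exactly $\sigma^{-1}(j)<j<\sigma(j)$, i.e.\ $j$ is an upper transient of $\sigma$. Hence the number of upper crossings of $\pi$ equals the number of upper crossings of $\sigma$ plus $\ut(\sigma)$. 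Symmetrically, the lower-crossing inequality $\pi(i)<\pi(j)\leq i<j$ translates to $\sigma(i)<\sigma(j)\leq i-1<i<j$, which is the lower-crossing condition on $\sigma$ restricted by the strict inequality $\sigma(j)<i$; the lost pairs satisfy $\sigma(j)=i$, forcing $j=\sigma^{-1}(i)$ and $\sigma(i)<i<\sigma^{-1}(i)$, i.e.\ $i$ is a lower transient of $\sigma$. So the lower-crossing count drops by exactly $\lt(\sigma)$.

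Summing the two contributions yields $\crs(\pi)=\crs(\sigma)+\ut(\sigma)-\lt(\sigma)$, as claimed. The whole argument is really bookkeeping of inequalities, so there is no serious obstacle; the only subtle point to articulate carefully is that shifting values by $+1$ turns strict inequalities between values into non-strict ones against the positions (which is precisely what injects upper transients into the upper-crossing tally and removes lower transients from the lower-crossing tally).
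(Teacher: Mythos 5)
Your proof is correct and follows essentially the same route as the paper: both arguments track, pair by pair, how the value shift $\pi(i)=\sigma(i)+1$ turns the strict inequalities $j<\sigma(i)$ and $\sigma(j)<i$ into the weak ones $j\leq\sigma(i)$ and $\pi(j)\leq i$, so that upper transients enter the upper-crossing count and lower transients leave the lower-crossing count. Your explicit check that pairs $(i,n+1)$ contribute nothing is a small completeness point the paper leaves implicit, but the substance is identical.
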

	\begin{proof}
		Let $\sigma \in S_{n}$ and $\pi=\sigma^{(n+1,1)}$.  From definition  of $\pi$, the following statements hold.
		\begin{enumerate}
			\item $i< j< \sigma(i)<\sigma(j)$ if only if  $i< j< j+1<\pi(i)<\pi(j)$.
			\item $\sigma(i)<\sigma(j)<i< j$ if only if  $\pi(i)<\pi(j)\leq i< j$.
			\item $\sigma^{-1}(i)< i< \sigma(i)$ if only if  $\pi^{-1}(i+1)< i$ and  $\pi(i)>i+1$. It means that $(\sigma^{-1}(i),i)$ which does not a crossing of $\sigma$ becomes one of $\pi$.
			\item $\sigma(i)< i< \sigma^{-1}(i)$ if only if  $\pi(i) \leq i$ and  $i+1\leq\pi^{-1}(i)$. It means that  $(i,\sigma^{-1}(i))$ is counted as a crossing of $\sigma$ becomes no longer one of $\pi$. 	 
		\end{enumerate}
		Graphical illustration of the statements 3. an 4. are given in the figure \ref{fig:arcdiag3} below. From these four statements, we get $\crs(\pi)=\crs(\sigma)+|\{i/\sigma^{-1}(i)< i< \sigma(i)\}|-|\{i/\sigma(i)< i< \sigma^{-1}(i)\}|=\crs(\sigma)+\ut(\sigma)-\lt(\sigma)$. 
	\end{proof}
	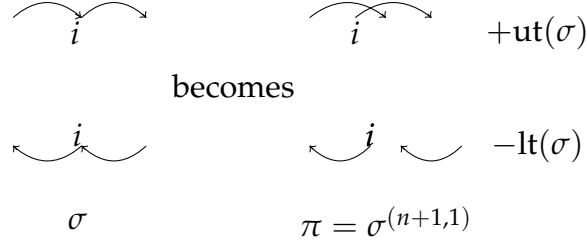
\begin{figure}[h]
		\begin{center}
			\begin{tikzpicture}
			\draw (-0.9,1.2) parabola[parabola height=0.2cm,red] (-0,1.2); \draw[->,black] (-0.085,1.25)--(-0,1.2);
			\draw (0,1.2) parabola[parabola height=0.2cm,red] (0.85,1.2);  \draw[->,black] (0.8,1.25)--(0.85,1.2);
			\draw[black] (-0.08,1) node {$i$}; \draw[black] (3.6,1) node {$i$};	 \draw[black] (6,1) node {$+\ut(\sigma)$};	
			
			\draw (3,1.2) parabola[parabola height=0.2cm,red] (4,1.2); 
			\draw[->,black] (3.95,1.25)--(4,1.2);
			\draw (3.6,1.2) parabola[parabola height=0.2cm,red] (4.6,1.2); 	\draw[->,black] (4.55,1.25)--(4.6,1.2);	
			
			\draw[black] (2,0.3) node {becomes};
			
			\draw (-0.9,-0.5) parabola[parabola height=-0.2cm,red] (-0,-0.5); \draw[->,black] (-0.85,-0.55)--(-0.9,-0.5);	
			\draw (0,-0.5) parabola[parabola height=-0.2cm,red] (0.85,-0.5); \draw[->,black] (0.05,-0.55)--(-0,-0.5);	
			
			\draw (3,-0.5) parabola[parabola height=-0.2cm,red] (3.8,-0.5); \draw[->,black] (3.05,-0.55)--(3,-0.5);
			\draw (4.2,-0.5) parabola[parabola height=-0.2cm,red] (5,-0.5); 
			\draw[->,black] (4.25,-0.55)--(4.2,-0.5);
			\draw[black] (-0.05,-0.35) node {$i$}; \draw[black] (3.8,-0.35) node {$i$};	
			
			\draw[black] (-0.05,-1.5) node {$\sigma$}; \draw[black] (4,-1.5) node { $\pi=\sigma^{(n+1,1)}$};
			
			\draw[black] (3.8,-0.35) node {$i$}; \draw[black] (6,-0.5) node {$-\lt(\sigma)$};	
			
			\end{tikzpicture}
			\caption{Graphical illustration of the statements 3. and 4. in the proof of lemma \ref{lem21}.}
			\label{fig:arcdiag3}
		\end{center}
	\end{figure}
	
	\hspace{-0.6cm }We can similarly prove that we the following lemma holds.  
	\begin{lemma}\label{lem23}
		For any $\sigma \in S_n$, we have $\crs(\sigma^{(n,1)})=\crs(\sigma)+1-\delta_{n,\sigma(n)}+\ut(\sigma)-\lt(\sigma)$.
	\end{lemma}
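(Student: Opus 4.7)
The plan is to mirror the case analysis used in the proof of Lemma \ref{lem21}. Setting $\pi := \sigma^{(n,1)}$, one has $\pi(i)=\sigma(i)+1$ for $1\leq i\leq n-1$, $\pi(n)=1$ and $\pi(n+1)=\sigma(n)+1$; in particular $\pi$ differs from $\sigma^{(n+1,1)}$ only by the swap of the values at the last two positions. Because $\pi(n)=1$, no pair $(i,n)$ with $i<n$ can form a crossing of $\pi$, so the crossings of $\pi$ split into three disjoint groups: (I) pairs with both positions in $\{1,\ldots,n-1\}$; (II) the single pair $(n,n+1)$; and (III) pairs $(i,n+1)$ with $i<n$.

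For group (I) the argument is word-for-word that of Lemma \ref{lem21}: shifting all values up by $1$ on positions $1,\ldots,n-1$ preserves every upper and lower crossing of $\sigma$ lying in that range, creates a new upper crossing $(\sigma^{-1}(j),j)$ for each upper transient $j$ of $\sigma$ (every upper transient automatically satisfies $\sigma^{-1}(j),j\leq n-1$, so all of them contribute), and destroys the lower crossing $(i,\sigma^{-1}(i))$ for each lower transient $i$ of $\sigma$ with $\sigma^{-1}(i)\in\{1,\ldots,n-1\}$. For group (II), a direct check shows that $\pi(n)=1<\pi(n+1)=\sigma(n)+1\leq n$ holds exactly when $\sigma(n)<n$, making $(n,n+1)$ a lower crossing in that case and in that case only, which gives the contribution $1-\delta_{n,\sigma(n)}$. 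For group (III), upper crossings are impossible because $\pi(i)=\sigma(i)+1\leq n<n+1$, and the lower-crossing condition $\pi(i)<\pi(n+1)\leq i$ becomes $\sigma(i)<\sigma(n)\leq i-1$; this bijects the lower crossings of $\pi$ in this group with the lower crossings $(i,n)$ of $\sigma$ satisfying $\sigma(n)<i$, so precisely the lower crossings of $\sigma$ with $j=n$ and $\sigma(j)=i$ are lost, i.e.\ the lower transients $i$ of $\sigma$ for which $\sigma^{-1}(i)=n$.

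Putting the three groups together, the surviving crossings of $\sigma$ contribute $\crs(\sigma)$, the new upper crossings from (I) produce $+\ut(\sigma)$, the crossings destroyed in (I) and (III) are together indexed by all lower transients of $\sigma$ and yield $-\lt(\sigma)$, and (II) contributes $1-\delta_{n,\sigma(n)}$; summing these recovers the claimed identity. The only real delicacy, and the step I would check most carefully, is that the two sources of lost lower crossings --- those coming from lower transients with $\sigma^{-1}(i)\leq n-1$ (case (I)) and those with $\sigma^{-1}(i)=n$ (case (III)) --- are disjoint and together exhaust $\lt(\sigma)$; this is immediate from the range $i+1\leq\sigma^{-1}(i)\leq n$, but it is the one place where an off-by-one could easily slip in.
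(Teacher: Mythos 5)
Your proof is correct, and it is precisely the ``similar'' argument that the paper leaves implicit: the paper only proves Lemma \ref{lem21} in detail and asserts that Lemma \ref{lem23} follows the same way, and your three-group decomposition, the $1-\delta_{n,\sigma(n)}$ contribution from the pair $(n,n+1)$, and the splitting of the lost lower crossings according to whether $\sigma^{-1}(i)\leq n-1$ (group (I)) or $\sigma^{-1}(i)=n$ (group (III)) all check out. One cosmetic slip: in group (III) you justify the absence of upper crossings by $\pi(i)=\sigma(i)+1\leq n$, which fails when $\sigma(i)=n$; the correct (and still sufficient) bound is $\pi(i)\leq n+1$, which already contradicts the upper-crossing requirement $n+1<\pi(i)$.
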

Before presenting our bijections, we need the following lemma which completes the previous ones.
\begin{lemma}\label{lem24}
	Let $\sigma$ be a given permutation and $\pi=\sigma^{-1}$ or ${\rm rc}(\sigma)$, we have $$\crs(\pi)=\crs(\sigma)+\ut(\sigma)-\lt(\sigma).$$ 
\end{lemma}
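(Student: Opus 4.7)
The plan is to mimic the case analysis used for Lemma~\ref{lem21}, but now exploiting the global structure of inversion and $\rc$ rather than a single elementary insertion. The essential point is that both operations act as involutions that send each arc of $\sigma$ to an arc of $\pi$ sharing the same endpoints, but swapping the role of ``upper'' and ``lower''. Thus the multiset of chord-theoretic intersections is preserved, and the only discrepancy between $\crs(\sigma)$ and $\crs(\pi)$ is due to the asymmetric convention in the definition of crossings (strict $<$ on the upper side, $\leq$ on the lower side).

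I would first handle the case $\pi=\sigma^{-1}$. Setting $a=\sigma(i)$ and $b=\sigma(j)$, one verifies directly from the definitions that: (i) a strict upper crossing $i<j<\sigma(i)<\sigma(j)$ of $\sigma$ translates into a strict lower crossing $\pi(a)<\pi(b)<a<b$ of $\pi$, and vice versa; (ii) each lower transient of $\sigma$ (the equality case $\sigma(j)=i$ in the type-B definition, which contributes exactly $\lt(\sigma)$ to $\crs(\sigma)$) becomes an upper-transient configuration $\pi^{-1}(i)<i<\pi(i)$ in $\pi$, which is \emph{not} counted by $\crs(\pi)$; (iii) conversely, each upper transient of $\sigma$ (a configuration $\sigma^{-1}(i)<i<\sigma(i)$ that is not a crossing of $\sigma$) becomes a lower transient $\pi(i)<i<\pi^{-1}(i)$ of $\pi$, which \emph{is} counted by $\crs(\pi)$. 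Summing the losses and gains yields $\crs(\pi)-\crs(\sigma)=\ut(\sigma)-\lt(\sigma)$.

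For $\pi=\rc(\sigma)$, the argument is entirely analogous once one observes that $\rc$ reverses positions via $i\mapsto n+1-i$ while also swapping excedances with non-excedances. A short direct computation gives $\ut(\rc(\sigma))=\lt(\sigma)$ and $\lt(\rc(\sigma))=\ut(\sigma)$, and then the same four-family dichotomy (strict upper, strict lower, upper transient, lower transient) applies: strict upper and strict lower crossings interchange, and the equality cases redistribute exactly $\ut(\sigma)-\lt(\sigma)$ between the two permutations. A figure analogous to Figure~\ref{fig:arcdiag3}, now depicting a chord together with its reflection, makes the correspondence entirely transparent.

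The hard part is not conceptual but a bookkeeping trap: one must keep in mind that the $\leq$ in the type-B crossing definition makes lower transients \emph{count} as crossings while upper transients do not, and one must be careful with how strict inequalities behave when indices and values are swapped (for $\sigma^{-1}$) or reversed (for $\rc$). Once the four-case analysis is carried out as in the proof of Lemma~\ref{lem21}, the identity drops out immediately in both cases.
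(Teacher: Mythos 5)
Your proposal is correct and follows essentially the same route as the paper: both arguments observe that inversion and reverse-complement swap upper and lower arcs (hence exchange the two kinds of transients), that strict upper and strict lower crossings interchange in equal number, and that the $\leq$ convention makes each lower transient a counted crossing while each upper transient is not, which yields exactly the correction $\ut(\sigma)-\lt(\sigma)$. Your write-up merely makes explicit the preservation of strict crossings, which the paper's shorter proof leaves implicit.
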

	\begin{proof}
		Let $\sigma \in S_n$ and $\pi=\sigma^{-1}$ or ${\rm rc}(\sigma)$. Applying reverse-complement or inverse  on $\sigma$ exchanges lower and upper arcs including of course transients, i.e. $\ut(\pi)=\lt(\sigma)$ and $\lt(\pi)=\ut(\sigma)$. On the first hand, each upper transient of $\sigma$, which is not counted as crossings of $\sigma$,  becomes one of $\pi$.  On the second hand, each lower transient of $\sigma$, which is counted as crossing of $\sigma$,  becomes no longer one of $\pi$. This explain how we get $\crs(\pi)=\crs(\sigma)+\ut(\sigma)-\lt(\sigma)$.
	\end{proof}
	Let us consider the  maps $\phi_k$ and $\psi_k$ from $S_n$ to $S_{n+1}^{k}$  with $k\geq 1$ that are defined respectively as follow,
$$\phi_k: \sigma \mapsto \sigma^{-(n+2-k,1)} \text{ and } \psi_k: \sigma \mapsto {\rm rc}(\sigma)^{(n+2-k,1)}.$$
As example, we have $\phi_3(31542)=361254$ and $\psi_3(31542)=531264$.	
	It is clear that the maps $\phi_k$ and $\psi_k$ are bijective for any $k\geq 1$. Furthermore,  we can use the three previous lemmas to prove the following proposition.
\begin{proposition}\label{prop1} The bijections $\phi_1$ and $\psi_1$ are $\crs$-preserving and  the bijection $\phi_2$ satisfies $\crs(\phi_2(\sigma))=\crs(\sigma)+1-\delta_{n,\sigma(n)}$ for any $\sigma \in S_n$.
\end{proposition}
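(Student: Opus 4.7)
The plan is to read each of $\phi_1,\psi_1,\phi_2$ as a composition of two simpler operations, and then chain the lemmas already proved. Every map in the statement has the form $\sigma \mapsto \tau^{(a,1)}$ where $\tau$ is either $\sigma^{-1}$ or $\rc(\sigma)$, and the position $a$ is $n+1$ in the case of $\phi_1,\psi_1$ and $n$ in the case of $\phi_2$. Lemma \ref{lem24} controls the passage from $\sigma$ to $\tau$, while Lemma \ref{lem21} controls the insertion step when $a=n+1$ and Lemma \ref{lem23} controls it when $a=n$. The key bookkeeping fact driving everything is that both involutions $\sigma\mapsto\sigma^{-1}$ and $\sigma\mapsto\rc(\sigma)$ swap upper and lower arcs, hence $\ut(\tau)=\lt(\sigma)$ and $\lt(\tau)=\ut(\sigma)$; this is already recorded inside the proof of Lemma \ref{lem24}.

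For $\phi_1$, I set $\tau=\sigma^{-1}$ so that $\phi_1(\sigma)=\tau^{(n+1,1)}$. By Lemma \ref{lem24}, $\crs(\tau)=\crs(\sigma)+\ut(\sigma)-\lt(\sigma)$; applying Lemma \ref{lem21} to $\tau$ gives $\crs(\phi_1(\sigma))=\crs(\tau)+\ut(\tau)-\lt(\tau)$. Substituting $\ut(\tau)=\lt(\sigma)$ and $\lt(\tau)=\ut(\sigma)$, the two $\ut-\lt$ contributions cancel and leave $\crs(\phi_1(\sigma))=\crs(\sigma)$. The computation for $\psi_1$ is verbatim the same with $\tau=\rc(\sigma)$, since Lemma \ref{lem24} and the transient-swap both apply to $\rc$ as well.

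For $\phi_2$, I again take $\tau=\sigma^{-1}$, but use Lemma \ref{lem23} in place of Lemma \ref{lem21}, giving
\begin{equation*}
\crs(\phi_2(\sigma))=\crs(\tau)+1-\delta_{n,\tau(n)}+\ut(\tau)-\lt(\tau).
\end{equation*}
Exactly the same cancellation as above collapses the $\crs,\ut,\lt$ terms to $\crs(\sigma)$, so it only remains to identify the Kronecker delta. Since $\tau(n)=\sigma^{-1}(n)$ and the condition $\sigma^{-1}(n)=n$ means that $n$ sits in position $n$ of $\sigma$, we have $\delta_{n,\tau(n)}=\delta_{n,\sigma(n)}$, which yields the claimed formula $\crs(\phi_2(\sigma))=\crs(\sigma)+1-\delta_{n,\sigma(n)}$.

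The main obstacle is essentially nonexistent, since Lemmas \ref{lem21}, \ref{lem23} and \ref{lem24} were designed precisely to be chained in this way; the only delicate point is verifying that the Kronecker delta in Lemma \ref{lem23}, when applied to $\tau=\sigma^{-1}$, translates to a condition on the last \emph{value} (not position) of $\sigma$, which is immediate from the definition of the inverse.
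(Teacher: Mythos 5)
Your proof is correct and follows essentially the same route as the paper: factor each map as an inverse (or reverse--complement) step followed by an insertion, chain Lemma \ref{lem24} with Lemma \ref{lem21} (resp.\ Lemma \ref{lem23}), and let the $\ut-\lt$ contributions cancel via the transient swap. If anything, your write-up is more careful than the paper's one-line argument, which appears to miscite the lemmas being combined (it invokes \ref{lem21} and \ref{lem23} for $\phi_1,\psi_1$ and \ref{lem21} and \ref{lem24} for $\phi_2$, whereas the intended pairings are \ref{lem21}$+$\ref{lem24} and \ref{lem23}$+$\ref{lem24}, exactly as you have them).
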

\begin{proof}
	Combining lemmas \ref{lem21} and \ref{lem23}, we get $\crs(\sigma^{-(n+1,1)})=\crs({\rm rc}(\sigma)^{(n+1,1)})=\crs(\sigma)$ for any $\sigma \in S_n$. This implies the cr-preserving of $\phi_1$ and $\psi_1$. By the same way,  when we combine lemmas \ref{lem21} and \ref{lem24}, we get $\crs(\phi_2(\sigma))=\crs(\sigma)+1-\delta_{n,\sigma(n)}$ for any $\sigma \in S_n$.
\end{proof}
	For any subset of permutations T  and any integer n, we denote by $F_n(\T;q)=\sum_{\sigma \in S_{n}(\T)} q^{\crs(\sigma)}$ the polynomial distribution of the number of crossings over the set of T-avoiding permutations and $F(\T;q,z)$ its generating function, i.e. 
	 $$F(\T;q,z)=\sum_{\sigma \in S(\T)} q^{\crs(\sigma)}z^{|\sigma|}=1+ \sum_{n\geq 1}F_n(\T;q) z^n.$$ 
	Particularly, we set $F_n(q)=\sum_{\sigma \in S_{n}} q^{\crs(\sigma)}=F_n(\emptyset;q)$.
	
\begin{theorem} \label{thm23}For all integer $n\geq 1$, we have
		$$\sum_{\sigma \in S_{n+1}^1} q^{\crs(\sigma)}=F_n(q) \text{ and } \sum_{\sigma \in S_{n+1}^2} q^{\crs(\sigma)}=qF_n(q)+(1-q)F_{n-1}(q).$$
\end{theorem}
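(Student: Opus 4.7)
The plan is to apply Proposition \ref{prop1} directly and then perform a small case-split on the sum. For the first identity, since $\phi_1\colon S_n \to S_{n+1}^1$ is a bijection that preserves $\crs$, we immediately obtain
\[
\sum_{\sigma\in S_{n+1}^1} q^{\crs(\sigma)} \;=\; \sum_{\sigma\in S_n} q^{\crs(\phi_1^{-1}(\sigma))} \;=\; \sum_{\sigma\in S_n} q^{\crs(\sigma)} \;=\; F_n(q).
\]

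For the second identity, I would use that $\phi_2\colon S_n \to S_{n+1}^2$ is a bijection satisfying $\crs(\phi_2(\sigma)) = \crs(\sigma) + 1 - \delta_{n,\sigma(n)}$. Rewriting the sum over $S_{n+1}^2$ via $\phi_2^{-1}$ gives
\[
\sum_{\sigma\in S_{n+1}^2} q^{\crs(\sigma)} \;=\; \sum_{\sigma\in S_n} q^{\,\crs(\sigma)+1-\delta_{n,\sigma(n)}},
\]
and the only remaining task is to split the right-hand side according to whether $\sigma(n)=n$ or not.

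Let $A_n := \{\sigma\in S_n\colon \sigma(n)=n\}$. The key sub-observation is that removing the trailing fixed point gives a $\crs$-preserving bijection $A_n \to S_{n-1}$: indeed, if $\sigma(n)=n$ then no pair of indices of the form $(i,n)$ or $(n,j)$ can satisfy either of the two crossing inequalities (the first requires $\sigma(i)<\sigma(n)=n$ and $i<n<\sigma(i)$, which is impossible; the second is similarly excluded). Hence $\sum_{\sigma\in A_n} q^{\crs(\sigma)}=F_{n-1}(q)$. Splitting the sum according as $\sigma\in A_n$ or $\sigma\in S_n\setminus A_n$ and using $\delta_{n,\sigma(n)}=1$ in the first case and $0$ in the second gives
\[
\sum_{\sigma\in S_{n+1}^2} q^{\crs(\sigma)} \;=\; \sum_{\sigma\in A_n} q^{\crs(\sigma)} \;+\; q\sum_{\sigma\in S_n\setminus A_n} q^{\crs(\sigma)} \;=\; F_{n-1}(q) + q\bigl(F_n(q)-F_{n-1}(q)\bigr),
\]
which simplifies to $qF_n(q)+(1-q)F_{n-1}(q)$, as claimed.

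There is essentially no obstacle here once Proposition \ref{prop1} is in hand; the only point requiring a line of verification is the $\crs$-preserving identification of $A_n$ with $S_{n-1}$, and this follows directly from the definition of a crossing. The whole argument is a short computation built on top of the bijections $\phi_1,\phi_2$ already constructed in Section \ref{sec2}.
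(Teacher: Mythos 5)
Your proposal is correct and follows essentially the same route as the paper: the first identity via the $\crs$-preserving bijection $\phi_1$, and the second via $\phi_2$ together with a split of the sum over $S_n$ according to whether $\sigma(n)=n$, using $\sum_{\sigma(n)=n}q^{\crs(\sigma)}=F_{n-1}(q)$. The only difference is that you explicitly verify that deleting the trailing fixed point is $\crs$-preserving, a point the paper's proof uses without comment.
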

\begin{proof}
	The first identity of theorem \ref{thm23} comes from the cr-preserving of the bijection $\phi_1$ (or $\psi_1$). The second one obviously use the bijection $\phi_2$ and its property
\begin{equation*}
	\crs(\phi_2(\sigma))=	
	\begin{cases}
		\crs(\sigma)  &    \text{ if } \ \ \sigma(n)=n \\
		\crs(\sigma)+1  &    \text{ if } \ \ \sigma(n)<n
	\end{cases}.
\end{equation*} 
So, we get
\begin{eqnarray*}
\sum_{\phi_2(\sigma) \in S_{n+1}^2} q^{\crs(\phi_2(\sigma))}&=&q\times \sum_{\sigma \in S_{n},\sigma(n)\neq n} q^{\crs(\sigma)}+\sum_{\sigma \in S_{n},\sigma(n)= n} q^{\crs(\sigma)}.
\end{eqnarray*}	
Since 	$\displaystyle \sum_{\sigma \in S_{n},\sigma(n)\neq n} q^{\crs(\sigma)}=\sum_{\sigma \in S_{n}} q^{\crs(\sigma)} -\sum_{\sigma \in S_{n},\sigma(n)= n} q^{\crs(\sigma)}$ and $\displaystyle \sum_{\sigma \in S_{n},\sigma(n)= n} q^{\crs(\sigma)}=F_{n-1}(q)$, we obtain the following identity
\begin{eqnarray*}
	\sum_{\sigma \in S_{n+1}^2} q^{\crs(\sigma)}&=&q\left( F_{n}(q) -F_{n-1}(q)\right) +F_{n-1}(q).
\end{eqnarray*}
which is equivalent with the desired one of theorem	\ref{thm23}.
\end{proof}
When $k\geq 3$, the question arises naturally about the possible expressions for $\sum_{\sigma \in S_{n}^k} q^{\crs(\sigma)}$ that may use some property of $\phi_k$ (or $\psi_k$). We verify by computer up to n=6 the following conjecture.
\begin{conjecture}
	For all integers $n$ and $k$ satisfying $1\leq k\leq n$, we have 
	$$\sum_{\sigma \in S_{n}^k} q^{\crs(\sigma)}=\sum_{\sigma \in S_{n}^{n+1-k}} q^{\crs(\sigma)}.$$
\end{conjecture}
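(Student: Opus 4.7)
The conjecture asserts a symmetry $F_n^k(q) = F_n^{n+1-k}(q)$ for the crossing-distribution polynomial $F_n^k(q) := \sum_{\sigma \in S_n^k} q^{\crs(\sigma)}$. My plan proceeds in three stages.

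First, I would apply identity (\ref{rel21}) to recast the statement. Since $\rci$ is $\crs$-preserving and carries $S_n^k$ bijectively onto $S_{n,k}$, the conjecture is equivalent to
\[
\sum_{\sigma \in S_{n,k}} q^{\crs(\sigma)} = \sum_{\sigma \in S_{n,n+1-k}} q^{\crs(\sigma)}.
\]
This reformulation is attractive because the complement $c$ already realizes a set-theoretic bijection $S_{n,k} \to S_{n,n+1-k}$; the remaining task is to turn it into a $\crs$-preserving map, possibly by composing with corrective transpositions.

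Second, I would extend Proposition \ref{prop1} to every $\phi_k$ with $k \geq 3$. The key ingredient is a generalization of Lemmas \ref{lem21} and \ref{lem23}: a formula for $\crs(\sigma^{(a,1)}) - \crs(\sigma)$ when $1$ is inserted at an arbitrary position $a$. I expect the correction term to take the shape $\ut(\sigma) - \lt(\sigma) + h_a(\sigma)$, with $h_a(\sigma)$ a localized statistic counting the arcs of $\sigma$ that are cut by position $a$, together with boundary contributions from upper and lower transients adjacent to $a$. Combining this with Lemma \ref{lem24}, we obtain an explicit formula $\crs(\phi_k(\sigma)) = \crs(\sigma) + g_k(\sigma)$ for some statistic $g_k$ on $S_{n-1}$.

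Third, and this is the decisive step, I would construct a $\crs$-preserving involution $\alpha$ on $S_{n-1}$ satisfying $g_k(\alpha(\sigma)) = g_{n+1-k}(\sigma)$. Given such an $\alpha$, the composition $\phi_{n+1-k} \circ \alpha \circ \phi_k^{-1}$ would be a $\crs$-preserving bijection $S_n^k \to S_n^{n+1-k}$, settling the conjecture.

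The main obstacle will be this third step. The statistic $g_k$ has a global flavour, as it records how the arc structure of $\sigma$ interacts with the designated position $n+1-k$, so a purely local involution is unlikely to work. The most promising avenue is to pass through the Laguerre history encoding of permutations of \cite{Cort}, under which $\crs$ becomes a multiplicative weight on a labelled Motzkin path; the constraints controlling $g_k$ and $g_{n+1-k}$ should then translate into symmetric conditions on the underlying path, and a height-preserving reflection could realize $\alpha$. Should the bijective route resist, a viable alternative is to express $F_n^k(q)$ as a coefficient in a Flajolet--Viennot continued fraction expansion and verify the symmetry $k \leftrightarrow n+1-k$ directly from these explicit formulas.
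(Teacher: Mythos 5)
The statement you are addressing is left open in the paper: the authors state it only as a conjecture, verified by computer up to $n=6$, and offer no proof. So there is no argument of theirs to compare against, and your proposal must stand on its own. It does not: it is a research plan rather than a proof. Your first stage is sound --- identity (\ref{rel21}) with $\T=\emptyset$ does show that the conjecture is equivalent to $\sum_{\sigma \in S_{n,k}} q^{\crs(\sigma)} = \sum_{\sigma \in S_{n,n+1-k}} q^{\crs(\sigma)}$, and it is true that the complement realizes the underlying set bijection $S_{n,k}\to S_{n,n+1-k}$ without preserving $\crs$. But everything after that is conditional. In stage two you do not derive the asserted formula for $\crs(\sigma^{(a,1)})-\crs(\sigma)$; you only ``expect'' a correction term of a certain shape, and the statistic $g_k$ is never actually defined. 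The difficulty is real: Lemmas \ref{lem21} and \ref{lem23} handle insertion of $1$ at the last and second-to-last positions, where the case analysis of which arcs become crossings is short; for a general position $a$ the inserted lower arc from position $a$ to the new position of the former minimum interacts with arbitrarily many arcs straddling $a$, and the bookkeeping (which pairs gain or lose crossing status, and how transients convert) has to be done explicitly before any $g_k$ exists to work with.

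The decisive gap is stage three, which you yourself identify as the main obstacle. No involution $\alpha$ with $g_k(\alpha(\sigma))=g_{n+1-k}(\sigma)$ is constructed, and no evidence is given that one exists; the appeal to Laguerre histories and a ``height-preserving reflection'' is a plausible direction but is not carried out, and the fallback via continued fractions runs into exactly the difficulty the paper records elsewhere, namely that closed continued-fraction expressions for these crossing distributions are only known in restricted settings. As written, the proposal establishes only the (correct but easy) equivalence of the conjecture with its $S_{n,k}$ reformulation, and leaves the conjecture itself unproved.
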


Now, let us look at how the restrictions of $\pi_1$ and $\psi_1$ may help us to get nice relationships between the polynomial distributions of the number of crossings over the set of 231 and 312-avoiding permutations. A result that may be useful in future research.
	
	\begin{theorem} \label{thm24}
		We have $F(312;q,z)=\displaystyle \frac{1}{1-zF(231;q,z)}$.
	\end{theorem}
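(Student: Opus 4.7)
The plan is to decompose a 312-avoiding permutation at the position of its smallest entry, translate the factor ending in $1$ through the cr-preserving bijection $\phi_1$ of Proposition \ref{prop1}, and then convert the resulting convolution into the claimed functional equation.

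Given $\sigma\in S_n(312)$ with $n\geq 1$, I would set $p=\sigma^{-1}(1)$ and write $\sigma=\alpha\cdot 1\cdot\beta$ with $\alpha=\sigma(1)\cdots\sigma(p-1)$ and $\beta=\sigma(p+1)\cdots\sigma(n)$. The 312-avoidance of $\sigma$ forbids the triple $(\sigma(i),1,\sigma(j))$ from realizing the pattern $312$ for $i<p<j$, which forces every value to the left of the $1$ to be smaller than every value to its right; consequently $\alpha$ is a (still 312-avoiding) permutation of $\{2,\ldots,p\}$ and $\beta$ is a (still 312-avoiding) permutation of $\{p+1,\ldots,n\}$. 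A short inspection of the four defining inequalities then shows that no crossing of $\sigma$ can have one index in $\{1,\ldots,p\}$ and the other in $\{p+1,\ldots,n\}$, nor can any crossing touch position $p$. Hence $\crs(\sigma)=\crs(\alpha\cdot 1)+\crs(\beta)$, where $\alpha\cdot 1\in S_p^1(312)$, and shifting both the positions and the values of $\beta$ down by the common amount $p$ yields $\beta'\in S_{n-p}(312)$ with $\crs(\beta)=\crs(\beta')$, since the crossing inequalities are unchanged when positions and values are translated by the same constant.

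Next I would verify that $\phi_1$ restricts to a bijection $S_m(231)\to S_{m+1}^1(312)$. Writing $\phi_1(\tau)(i)=\tau^{-1}(i)+1$ for $i\leq m$ and $\phi_1(\tau)(m+1)=1$, every potential $312$-occurrence in $\phi_1(\tau)$ with rightmost index $m+1$ is killed by the trailing $1$; an occurrence at indices $i_1<i_2<i_3\leq m$ is equivalent to $\tau^{-1}(i_2)<\tau^{-1}(i_3)<\tau^{-1}(i_1)$, which says that reading $\tau$ left to right the three values $i_1<i_2<i_3$ appear in the order $i_2,i_3,i_1$, i.e.\ a 231-pattern in $\tau$. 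Running this translation in both directions makes the restriction bijective, and combining with the cr-preserving property of $\phi_1$ gives $\sum_{\pi\in S_p^1(312)}q^{\crs(\pi)}=F_{p-1}(231;q)$.

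Combining the three ingredients produces the convolution
\[ F_n(312;q)=\sum_{p=1}^{n}F_{p-1}(231;q)\,F_{n-p}(312;q)\qquad (n\geq 1), \]
which, after multiplying by $z^n$, summing over $n\geq 1$, and using $F_0(312;q)=1$, rearranges to $F(312;q,z)=1+z\,F(231;q,z)\,F(312;q,z)$ and hence to the stated identity. The step needing the most care is the crossing bookkeeping during the decomposition: the identity $\crs(\beta)=\crs(\beta')$ relies on positions and values of $\beta$ being shifted by the same amount, which is precisely what happens when we split at the position of $1$; splitting at the position of $n$, for instance, would make the two shifts differ by one and break the invariance, explaining why $1$ is the correct pivot.
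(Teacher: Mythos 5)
Your proposal is correct and follows essentially the same route as the paper: decompose a $312$-avoiding permutation at the position of $1$, identify the prefix (together with the $1$) as an element of $S_p^1(312)$ in cr-preserving bijection with $S_{p-1}(231)$ via $\phi_1$, and turn the resulting convolution into the functional equation $F(312;q,z)=1+zF(231;q,z)F(312;q,z)$. You simply spell out in more detail two steps the paper leaves implicit, namely the additivity of crossings across the split at $1$ and the verification that $\phi_1$ maps $S_m(231)$ onto $S_{m+1}^1(312)$.
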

	\begin{proof}
		Know that the restriction of $\phi_1$ (or $\psi_1$) on $S_n(231)$ is  a  bijection between $S_n(231)$ to $S_{n+1}^1(312)$ that preserves the statistic $\crs$. Consequently, we have 
		\begin{equation*}
		\sum_{\sigma \in S_{n+1}^1(312)} q^{\crs(\sigma)} =\sum_{\sigma \in S_{n}(231)} q^{\crs(\sigma)}.
		\end{equation*}
		
 Know also that each $\sigma \in S_n^j(312)$ can be written as the direct sum $\sigma_1\oplus\sigma_2:=\sigma_1.\sigma_2^{+j}$ of $\sigma_1 \in S_{j}^1$ and $\sigma_2\in S_{n-j}(312)$  where $\alpha^{+x}$ denote the obtained permutation from $\alpha$ by adding $x$ each of its letter, for any permutation $\alpha$ and integer $x\geq 1$ .  
 Consequently, the product $S_{j-1}(231)\times S_{n-j}(312)$ is in bijection with  the set $S_{n}^{j}(312)$  by the simple way: $(\alpha, \beta)\longmapsto \alpha^{-(j,1)}\oplus \beta$.  Hence, in terms of generating function, we get 
		\begin{eqnarray*}
			\sum_{\sigma \in S_{n}^{j}(312)} q^{\crs(\sigma)}&=&\sum_{\alpha \in S_{j-1}(231)} q^{\crs(\alpha)} \times \sum_{\beta \in S_{n-j}(312)} q^{\crs(\beta)}.
		\end{eqnarray*}
		This implies that
		\begin{eqnarray*}
			\sum_{\sigma \in S_{n}(312)} q^{\crs(\sigma)}&=& \sum_{j=0}^{n-1} \sum_{\alpha \in S_{j}(231)} q^{\crs(\alpha)} \times \sum_{\beta \in S_{n-1-j}(312)} q^{\crs(\beta)}.
		\end{eqnarray*}
		Thus, we get from this last identity the following functional equation 
		$$F(312;q,z) =1+z\ F(231;q,z)\times F(312;q,z).$$
		Solving it for $F(312;q,z)$, we get the proof of theorem \ref{thm24}.
	\end{proof}
	\section{Combinatorial interpretations of $\binom{n}{k}$}\label{sec3}
	In this section,  we enumerate the set of (123,132) and (123,213)-avoiding permutations according to the number of crossings and provide its connection to the binomial coefficients. That naturally implies the proof of theorem \ref{main1}.  Our method is based on exploitation of the fine structure of $S_n(123,132)$.  Then, we compute the closed formula for $\sum_{\sigma \in S_n(123,132)}q^{\crs(\sigma)}$ and extract its coefficients.  Since the only possible positions for 1 in  a permutation $\sigma \in S_n(123,132)$ are $n-1$ or $n$,  we can consider the partition $S_n(123,132)=S_n^1(123,132)\cup S_n^{2}(123,132)$.

	\begin{theorem} \label{thm31} Let $\T$ be one of the pairs $\{123,132\}$ and $\{123,213\}$. For all integer $n\geq 1$, we have $$\sum_{\sigma \in S_{n}(T)}q^{\crs(\sigma)} = \frac{(1+q)^{n-1}-1+q}{q}.$$
	\end{theorem}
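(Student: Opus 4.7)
Set $A_n(q) := \sum_{\sigma \in S_n(123,132)} q^{\crs(\sigma)}$. Since both 123 and 132 are involutions, one computes $\rci(\{123,132\}) = \{123,213\}$ ($\rci(123)=123$ and $\rci(132)=213$); because $\rci$ is $\crs$-preserving, the two identities of Theorem \ref{thm31} are equivalent and it is enough to prove the closed form for $A_n(q)$. My plan is to derive the recurrence
\[
A_n(q) = (1+q)\,A_{n-1}(q) + 1 - q \qquad (n \geq 2), \qquad A_1(q)=1,
\]
from the partition of $S_n(123,132)$ by the position of $1$ and then to solve it. Every $\sigma \in S_n(123,132)$ has 1 at position $n$ or $n-1$, for if $\sigma(k)=1$ with $k \leq n-2$ then the triple $(1,\sigma(k+1),\sigma(k+2))$ is a 123 or a 132 according to the order of its last two entries. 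So $S_n(123,132) = S_n^1(123,132) \sqcup S_n^2(123,132)$, which I analyze using the maps $\phi_1, \phi_2$ of Section \ref{sec2}.

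The map $\phi_1 : S_{n-1} \to S_n^1$ is $\crs$-preserving by Proposition \ref{prop1} and sends $\sigma$ to $(\sigma^{-1}(1)+1, \ldots, \sigma^{-1}(n-1)+1, 1)$. Length-3 patterns in $\phi_1(\sigma)$ that use the trailing 1 have shape $ab1$ with $a,b \geq 2$, hence are always 231 or 321; patterns lying entirely inside the first $n-1$ positions coincide with the corresponding patterns of $\sigma^{-1}$. Since $\{123,132\}$ is closed under inverse (both patterns are fixed by $\mathrm{i}$), $\phi_1$ restricts to a bijection $S_{n-1}(123,132) \to S_n^1(123,132)$, and $\crs$-preservation gives $\sum_{\sigma \in S_n^1(123,132)} q^{\crs(\sigma)} = A_{n-1}(q)$.

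The same type of case analysis shows that $\phi_2 : S_{n-1} \to S_n^2$ restricts to a bijection $S_{n-1}(123,132) \to S_n^2(123,132)$: the length-3 patterns created when the inserted 1 is involved are of shape $\ast 1 \ast$ or $\ast\ast 1$, giving only 213, 312, 231 or 321, none of them forbidden. Proposition \ref{prop1} gives $\crs(\phi_2(\sigma)) = \crs(\sigma) + 1 - \delta_{n-1,\sigma(n-1)}$, so splitting the sum according to whether $\sigma(n-1)=n-1$,
\[
\sum_{\sigma \in S_n^2(123,132)} q^{\crs(\sigma)} = q\,A_{n-1}(q) + (1-q)\!\!\sum_{\substack{\sigma \in S_{n-1}(123,132)\\ \sigma(n-1)=n-1}}\! q^{\crs(\sigma)}.
\]
The constraint $\sigma(n-1)=n-1$ together with 123-avoidance forces $\sigma|_{[n-2]}$ to be strictly decreasing (any ascending pair would form a 123 with $n-1$), leaving the single crossingless permutation $(n-2)(n-3)\cdots 1\,(n-1)$, so the inner sum equals $1$. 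Hence $\sum_{\sigma \in S_n^2(123,132)} q^{\crs(\sigma)} = q\,A_{n-1}(q) + 1 - q$.

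Adding the two contributions produces the recurrence. Solving it with the ansatz $A_n(q) = \alpha\,(1+q)^{n-1} + \beta$ forces $\beta = (q-1)/q$, and $A_1(q)=1$ then fixes $\alpha = 1/q$, giving $A_n(q) = \bigl((1+q)^{n-1}+q-1\bigr)/q$ as claimed. The main obstacle is verifying that $\phi_1$ and $\phi_2$ respect the avoidance of $\{123,132\}$ and isolating the unique $\sigma(n-1)=n-1$ contribution in the $S_n^2$ sum; once these are in place, the rest is elementary algebra.
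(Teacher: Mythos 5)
Your proposal is correct and follows essentially the same route as the paper: the same partition of $S_n(123,132)$ by the position of $1$, the same use of the restrictions of $\phi_1$ and $\phi_2$ together with $\crs(\phi_2(\sigma))=\crs(\sigma)+1-\delta_{n-1,\sigma(n-1)}$, the same identification of $(n-2)\cdots 21(n-1)$ as the unique crossingless exception, the same recurrence $F_n=(1+q)F_{n-1}+1-q$, and the same appeal to $\rci$ for the pair $\{123,213\}$. Your explicit verification that $\phi_1$ and $\phi_2$ respect the avoidance classes is slightly more detailed than the paper's, but the argument is the same.
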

	\begin{proof} Let us fix $\T=\{123,132\}$. Firstly, using the above described partition, we have
	\begin{equation}\label{eq31}
		\sum_{\sigma \in  S_{n}(\T)}q^{\crs(\sigma)}=
		\sum_{\sigma \in  S_{n}^{1}(\T)}q^{\crs(\sigma)}+ 
		\sum_{\sigma \in  S_{n}^{2}(\T)}q^{\crs(\sigma)}.
	\end{equation} 
Secondly, we can think of any restricted version of theorem \ref{thm23}. Since the restriction of $\phi_1$ on $S_{n-1}(\T)$, is a bijection from $S_{n-1}(\T)$ to $S_{n}^{1}(\T)$,  we get in terms of generating function 
\begin{equation}\label{eq32}
\sum_{\sigma \in S_{n}^{ 1}(\T)}q^{\crs(\sigma)}=F_{n-1}(\T;q).
\end{equation}
Similarly, the restriction of $\phi_2$ on $S_{n-1}(\T)$ satisfies
\begin{equation*}		
\crs(\phi_2(\sigma))=	
\begin{cases}
\crs(\sigma)=0  &    \text{ if } \ \ \sigma(n-1)=n-1 \\
\crs(\sigma)+1  &    \text{ if } \ \ \sigma(n-1)<n-1
\end{cases} 
\end{equation*} 
since the only $\sigma \in S_{n-1}(\T)$ satisfying $\sigma(n-1)=n-1$ is $\sigma=(n-2)\ldots 21(n-1)$.	Consequently, we get	
\begin{equation}\label{eq33}
\sum_{\sigma \in S_{n}^{2}(\T)}q^{\crs(\sigma)}=qF_{n-1}(T;q)+1-q.
\end{equation}		
Thirdly, when we observe and combine the three relations (\ref{eq31}), (\ref{eq32}) and (\ref{eq33}), we get the following recurrence
		\begin{equation*}F_{n}(\T;q)=(1+q)F_{n-1}(\T;q)+1-q
		\end{equation*}
Finally, when we solve this	recurrence with initial condition $F_{1}(\T;q)=1$, we obtain the closed formula  $F_{n}(\T;q)=\frac{(1+q)^{n-1}-1+q}{q}$. So, to complete the proof of theorem \ref{thm31} we use the fact that $\{123,213\}=_{\crs}\{123,132\}$ since $\{123,213\}=\rci(\{123,132\})$.
	\end{proof}
	When we extract the coefficients of $F_{n}(T;q)$, we obtain the following enumeration result.
	\begin{corollary} Let $\T$ be one of the pairs $\{123,132\}$ and $\{123,213\}$.
		For all integers $n\geq 1$ and $k\geq 0$, we have 
		\begin{equation*}
		|\{\sigma \in S_n(\T)/\crs(\sigma)=k\}|=\delta_{k,0}+\binom{n-1}{k+1}.
		\end{equation*} 
	\end{corollary}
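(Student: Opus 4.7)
The plan is purely algebraic: the corollary is just the coefficient extraction from the closed form established in Theorem \ref{thm31}, so there is no real obstacle once that theorem is in hand. First I would rewrite the formula from Theorem \ref{thm31} in a form where the binomial coefficients become visible. Splitting the numerator gives
\begin{equation*}
F_n(\T;q) \;=\; \frac{(1+q)^{n-1}-1+q}{q} \;=\; 1 \;+\; \frac{(1+q)^{n-1}-1}{q}.
\end{equation*}

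Next I would apply the binomial theorem to $(1+q)^{n-1}$, observe that the constant term cancels against the $-1$ in the numerator, and factor out one power of $q$ from the remaining sum:
\begin{equation*}
\frac{(1+q)^{n-1}-1}{q} \;=\; \frac{1}{q}\sum_{j=1}^{n-1}\binom{n-1}{j}q^{j} \;=\; \sum_{k=0}^{n-2}\binom{n-1}{k+1}q^{k}.
\end{equation*}
Combining with the $+1$ yields $F_n(\T;q)=1+\sum_{k=0}^{n-2}\binom{n-1}{k+1}q^{k}$, and reading off the coefficient of $q^{k}$ gives exactly $\delta_{k,0}+\binom{n-1}{k+1}$ (for $k\geq n-1$ the binomial coefficient vanishes, so the formula remains valid on the full range $k\geq 0$). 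The claim for the pair $\{123,213\}$ requires no extra work, as Theorem \ref{thm31} already covers it through the $\rci$-equivalence $\{123,213\}=_{\crs}\{123,132\}$.

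The only thing worth double-checking is the $k=0$ case, since there the $+1$ coming from the split actually adds to the genuine binomial contribution: indeed $\delta_{0,0}+\binom{n-1}{1}=1+(n-1)=n$, which matches the number of permutations in $S_n(\T)$ with no crossings (a quick sanity check can be done for small $n$ against Theorem \ref{thm31}). No further combinatorial argument is required.
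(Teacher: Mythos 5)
Your proposal is correct and is exactly what the paper does: the corollary is stated as an immediate coefficient extraction from the closed form $F_n(\T;q)=\frac{(1+q)^{n-1}-1+q}{q}$ of Theorem \ref{thm31}, and your splitting into $1+\sum_{k=0}^{n-2}\binom{n-1}{k+1}q^{k}$ via the binomial theorem is the intended (and only needed) computation. The handling of the $\{123,213\}$ case through the $\rci$-equivalence and the remark that $\binom{n-1}{k+1}=0$ for large $k$ also match the paper's reasoning.
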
 

\hspace{-0.6cm}\textbf{Proof of theorem \ref{main1}.} Let $\T=\{123,132\}$  and $\T'=\{123,213\}$. Know first that, from (\ref{rel21}), we have
$$\sum_{\sigma \in S_{n}^2(\T)}q^{\crs(\sigma)}=\sum_{\sigma \in S_{n,2}(\T')}q^{\crs(\sigma)}.$$ 
So, deriving from the proof of the previous theorem, we easily obtain that of theorem \ref{main1}. Indeed, we can have 
\begin{eqnarray*}
\sum_{\sigma \in S_{n}^2(\T)}q^{\crs(\sigma)} &= &F_{n}(\T;q)-F_{n-1}(\T;q)\\
&=& \frac{(1+q)^{n-1}-1+q}{q}-\frac{(1+q)^{n-2}-1+q}{q}\\
&=& (1+q)^{n-2}.
\end{eqnarray*} 
This complete the proof of theorem  \ref{main1}.
	
	\begin{corollary} Let $\T=\{123,132\}$ and $\T'=\{123,213\}$. For all integer $n\geq 2$, we have
		\begin{equation*}
		|\{\sigma \in S_n^2(\T)/\crs(\sigma)=k \}|=|\{\sigma \in S_{n,2}(\T')/\crs(\sigma)=k \}|=\binom{n-2}{k}.
		\end{equation*}
	\end{corollary}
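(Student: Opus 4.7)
The plan is to derive this corollary directly from Theorem \ref{main1}, which has just been established. That theorem gives the polynomial identity
\[
\sum_{\sigma \in S_{n}^2(\T)}q^{\crs(\sigma)} \;=\; \sum_{\sigma \in S_{n,2}(\T')}q^{\crs(\sigma)} \;=\; (1+q)^{n-2}.
\]
Since the cardinality $|\{\sigma \in S_n^2(\T)/\crs(\sigma)=k\}|$ is exactly the coefficient of $q^k$ in the left-hand polynomial (and likewise for $S_{n,2}(\T')$ on the right), the entire task reduces to extracting the $q^k$-coefficient from $(1+q)^{n-2}$.

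Accordingly, I would structure the proof in two short steps. First, invoke Theorem \ref{main1} to replace each distribution polynomial by $(1+q)^{n-2}$. Second, apply the binomial theorem
\[
(1+q)^{n-2} \;=\; \sum_{k \geq 0} \binom{n-2}{k} q^{k},
\]
and read off the coefficient of $q^k$ on each side. Matching coefficients of $q^k$ gives the common value $\binom{n-2}{k}$ for both cardinalities, which is precisely the claim.

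There is no real obstacle here: the corollary is a pure coefficient extraction once the polynomial identity of Theorem \ref{main1} is in hand. The only minor point worth stating explicitly in the write-up is that $\binom{n-2}{k}$ is understood to be $0$ whenever $k>n-2$, so that the equality holds for all $k\geq 0$ as written; this is the standard convention and needs no further argument.
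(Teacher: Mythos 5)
Your proposal is correct and matches the paper's (implicit) argument exactly: the corollary is obtained by extracting the coefficient of $q^k$ from the identity $\sum_{\sigma \in S_{n}^2(\T)}q^{\crs(\sigma)} =\sum_{\sigma \in S_{n,2}(\T')}q^{\crs(\sigma)}= (1+q)^{n-2}$ of Theorem \ref{main1} via the binomial theorem. Your remark about the convention $\binom{n-2}{k}=0$ for $k>n-2$ is a sensible addition.
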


	\section{Combinatorial interpretations of $(R_n^k(q))$}\label{sec4}	
	In this section, we will show how the q-tableau $(R_n^k(q))$ defined in section \ref{sec1} counts the set of T-avoiding permutations according to the number of crossings where $\T$ is one of the two patterns $ \{213,312\},\{132,312\}, \{213,312\}$ and $\{213,312\}$. These patterns are linked by the following relations $\{132,312\}=\rci(\{213,312\})$,  $\{132,231\}=\rci(\{213,231\})$ and $\{132,231\}=\{213,312\}^{-1}$ or $\{213,312\}^{\rc}$.  So, we choose to consider only the pattern $\{213,312\}$.

	So, let us fix $\T=\{213,312\}$. Since each \T-voiding permutation stars or ends with 1, we have $S_n(\T)=S_n^n(\T)\cup S_n^1(\T)$. Consequently, we have
	\begin{equation}\label{eq41}
	\sum_{\sigma \in S_n(\T)}q^{\crs(\sigma )}=\sum_{\sigma \in S_n^{n}(\T)}q^{\crs(\sigma )}+\sum_{\sigma \in S_n^{1}(\T)}q^{\crs(\sigma )}
	\end{equation}
	The  following proposition is obvious.
	\begin{proposition}\label{prop41}
		For all integer $n\geq 1$, we have $\displaystyle \sum_{\sigma \in S_n^{n}(\T)}q^{\crs(\sigma )}=F_{n-1}(\T;q).$
	\end{proposition}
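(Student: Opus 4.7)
The plan is to exhibit a crossing-preserving bijection $\Phi:S_n^n(\T)\to S_{n-1}(\T)$ by peeling off the initial entry. Given $\sigma\in S_n^n(\T)$ we have $\sigma(1)=1$, so define $\tilde\sigma\in S_{n-1}$ by $\tilde\sigma(i)=\sigma(i+1)-1$ for $1\le i\le n-1$; the inverse simply prepends a $1$ and shifts the other values up by $1$, so $\Phi$ is clearly a bijection of sets.

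Next I would check that $\Phi$ lands in $S_{n-1}(\T)$. This is where the specific choice of patterns matters: in both $213$ and $312$, the smallest letter $1$ lies in the middle position, never in the leftmost position. Therefore the entry $\sigma(1)=1$ cannot play the role of the ``$1$'' in any occurrence of $213$ or $312$ inside $\sigma$, and since $1$ is the smallest value, it cannot play any other role either. Hence occurrences of $213$ or $312$ in $\sigma$ are in exact correspondence with those in $\tilde\sigma$, and $\sigma$ avoids $\T$ iff $\tilde\sigma$ does.

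The final step is to show $\crs(\sigma)=\crs(\tilde\sigma)$. Because $\sigma(1)=1$ is a fixed point, no crossing pair $(i,j)$ of $\sigma$ can have $i=1$: an upper crossing needs $i<\sigma(i)$ and a lower one needs $\sigma(i)<i$, both failing at $i=1$. Thus every crossing $(i,j)$ of $\sigma$ has $i,j\ge 2$, and moreover all values $\sigma(i),\sigma(j)$ involved satisfy $\sigma(i),\sigma(j)\ge 2$. Translating indices and values by $-1$, the defining inequalities $i<j<\sigma(i)<\sigma(j)$ and $\sigma(i)<\sigma(j)\le i<j$ are preserved verbatim, giving a bijection between crossings of $\sigma$ and crossings of $\tilde\sigma$. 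Summing $q^{\crs(\sigma)}$ over $S_n^n(\T)$ therefore equals $F_{n-1}(\T;q)$.

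There is really no obstacle here beyond bookkeeping; the whole proof rests on the single observation that a permutation pinned by $\sigma(1)=1$ decomposes trivially for both the pattern-avoidance condition and the crossing statistic, which is consistent with the author's remark that the proposition is obvious.
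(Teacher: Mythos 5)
Your proof is correct and is exactly the argument the paper leaves implicit: the paper merely declares Proposition \ref{prop41} ``obvious,'' and the intended reasoning is precisely your observation that $S_n^n(\T)=\{1\oplus\pi:\pi\in S_{n-1}(\T)\}$, with the prepended fixed point $1$ participating in no occurrence of $213$ or $312$ and in no crossing. Nothing to correct.
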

	
	Before we compute $\sum_{\sigma \in S_n^{1}(\T)}q^{\crs(\sigma )}$, we will denote by $F_{n}^{k}(\T;q)=\sum_{\sigma \in S_{n}^{[k]}(\T)}q^{\crs(\sigma)} $ for $1\leq  k\leq n$. Observe that, we have $\sum_{\sigma \in S_n^{1}(\T)}q^{\crs(\sigma )}=F_{n}^{1}(\T;q)$ since $S_{n}^{1}=S_{n}^{[1]}$. $F_{n}^{1}(\T;q)$ can be computed from a recursively formula for $F_{n}^{k}(\T;q)$ that we will prove later (see proposition \ref{prop42}). For that, we will denote by  $A_j(\sigma)=\{i<j/\sigma(i)\geq j\}$ , $B_j(\sigma)=\{i+1<j/\sigma(i)\leq i \text{ and  } i+1\leq \sigma^{-1}(i+1)\}$ and $C_j(\sigma)=\{(i,k)/i<k<\sigma(i)=k+1<\sigma(k) \text{ and } k+1\leq j\}$ for any given permutation $\sigma \in S_n$ and integer $j\in [n]$.
	
	\begin{lemma}\label{lem41}
		For all $\sigma \in S_n$, we have $$\crs(\sigma^{(1,j)})=\crs(\sigma)+|A_j(\sigma)|+|B_j(\sigma)|-|C_j(\sigma)| \text{ for } 1\leq j\leq n.$$
	\end{lemma}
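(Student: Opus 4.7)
The plan is to mirror the proof of Lemma \ref{lem21} by tracking pair by pair how each potential crossing is created, destroyed, or preserved by the operation $\sigma \mapsto \pi := \sigma^{(1,j)}$. Write $\tilde{\sigma}$ for the intermediate permutation obtained from $\sigma$ by adding $1$ to each value $\geq j$, so that $\pi(1) = j$ and $\pi(c+1) = \tilde{\sigma}(c)$ for every $c \in [n]$. The key structural fact is that $\tilde{\sigma}$ preserves the relative order on the values of $\sigma$, i.e.\ $\sigma(c_1) < \sigma(c_2) \Leftrightarrow \tilde{\sigma}(c_1) < \tilde{\sigma}(c_2)$, so the only potentially decisive differences between a pair's status in $\sigma$ and in $\pi$ are the gap conditions $c_2 < \sigma(c_1)$ versus $c_2 + 1 < \tilde{\sigma}(c_1)$ (for upper crossings) and $\sigma(c_2) \leq c_1$ versus $\tilde{\sigma}(c_2) \leq c_1 + 1$ (for lower crossings), each of which can shift by at most $1$ according to whether $\sigma(c_1)$, respectively $\sigma(c_2)$, lies below or above $j$.

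I would then partition the pairs $(a,b)$ with $1 \leq a < b \leq n+1$ in $\pi$ into type (I), those with $a = 1$, and type (II), those of the form $(c_1+1, c_2+1)$ with $1 \leq c_1 < c_2 \leq n$. For (I), the new arc at position $1$ is the upper arc $(1,j)$, so it can only give upper crossings $(1, c+1)$ satisfying $c + 1 < j < \tilde{\sigma}(c)$. Since $\tilde{\sigma}(c) > j$ holds exactly when $\sigma(c) \geq j$, the contribution is $|A_j(\sigma)|$; a lower crossing $(1, b)$ is impossible because it would require $\pi(b) \leq 1$ strictly greater than $\pi(1) = j \geq 1$.

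For (II), a case split on whether $\sigma(c_1)$ and $\sigma(c_2)$ lie below or above $j$ shows that an upper crossing of $\sigma$ is destroyed in $\pi$ precisely when $\sigma(c_1) = c_2 + 1 < j$ (with $\sigma(c_2) > \sigma(c_1)$ automatic from the upper-crossing hypothesis), which accounts for the $-|C_j(\sigma)|$ term; that a new lower crossing appears in $\pi$ precisely when $\sigma(c_2) = c_1 + 1 < j$ and $\sigma(c_1) \leq c_1$, which (setting $i = c_1$ and using $c_2 = \sigma^{-1}(i+1) \geq i+1$) is exactly the $+|B_j(\sigma)|$ contribution; and that every other crossing is preserved, with no pair trading between the upper and lower types. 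Summing the (I) and (II) contributions then yields the claimed identity.

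The main obstacle is the careful bookkeeping in step (II): one has to verify all four sub-cases obtained by letting $\sigma(c_1)$ and $\sigma(c_2)$ each be $< j$ or $\geq j$, against both the upper and the lower crossing conditions, in order to rule out any perturbation beyond the two boundary cases $\sigma(c_1) = c_2 + 1$ and $\sigma(c_2) = c_1 + 1$ isolated above. A schematic arc-diagram figure analogous to Figure \ref{fig:arcdiag3} for Lemma \ref{lem21}, depicting these boundary shifts, would make the verification transparent.
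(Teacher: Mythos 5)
Your proposal is correct and takes essentially the same route as the paper's proof of Lemma~\ref{lem41}: a pairwise case analysis of $\pi=\sigma^{(1,j)}$ in which the arcs from the new position $1$ contribute the $|A_j(\sigma)|$ new upper crossings, the boundary case $\sigma(c_2)=c_1+1<j$ with $\sigma(c_1)\leq c_1$ produces the $|B_j(\sigma)|$ new lower crossings (which the paper phrases as new lower transients), the boundary case $\sigma(c_1)=c_2+1$ with $c_2+1\leq j$ destroys the $|C_j(\sigma)|$ upper crossings, and every other crossing is carried over under $(i,k)\mapsto(i+1,k+1)$. The paper's proof is simply a terser statement of the same four-case bookkeeping, so no further comparison is needed.
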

	\begin{proof}
		By definition, we have $\sigma^{(1,j)}(1)=j$ and  $\sigma^{(1,j)}(1+i)=\begin{cases}
		\sigma(i)+1 & \text{ if } \sigma(i)\leq j\\
		\sigma(i) & \text{ if } \sigma(i)< j
		\end{cases}.$
		The following facts comes from this definition.
		\begin{enumerate}
			\item  if $i\in A_j(\sigma)$ then  $(1,1+i)$ becomes an upper crossings of $\sigma^{(1,j)}$.	
			\item  If $i\in B_j(\sigma)$ then  $1+i$ becomes a lower transient, then a crossing for $\sigma^{(1,j)}$.	
			\item If $(i,k) \in C_j(\sigma)$  then $i+1<\sigma^{(1,j)}(1+i)=1+k<\sigma^{(1,j)}(1+k)$. So, the pair $(1+i,1+k)$ becomes an upper crossing of $\sigma^{(1,j)}$ which does not counted as crossing of $\sigma^{(1,j)}$.
			\item It is not difficult to see that  $(i,k)$ is a crossing of $\sigma$ which does not in $C_j(\sigma)$ if only if $(i+1,k+1)$ is one of $\sigma^{(1,j)}$.
		\end{enumerate} 
		We obtain lemma \ref{lem41} from these four properties. 
	\end{proof}
	\begin{corollary}\label{cor41}
		If $\sigma \in S_{n}^{[k]}(\T)$, we have $\crs(\sigma^{(1,k+1)})=\min\{k-1,n-k\}+\crs(\sigma)$.
	\end{corollary}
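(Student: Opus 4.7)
The plan is to invoke Lemma~\ref{lem41} with $j=k+1$ and carry out the three count computations $|A_{k+1}(\sigma)|$, $|B_{k+1}(\sigma)|$, $|C_{k+1}(\sigma)|$ by hand, using the very rigid structure imposed by $\sigma \in S_n^{[k]}(\T)$. Concretely, $\sigma(n+1-i)=i$ for $i\in[k]$ means that the last $k$ positions of $\sigma$ carry precisely the values $k,k-1,\ldots,1$ in decreasing order, so the value sets $\{k+1,\ldots,n\}$ and $\{1,\ldots,k\}$ are confined to the prefix positions $1,\ldots,n-k$ and to the tail positions $n-k+1,\ldots,n$ respectively. I expect each of the three counts to be forced by this bipartition, essentially independently of which $\{213,312\}$-avoiding prefix was chosen.

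For $A_{k+1}$, the clause $\sigma(i)\ge k+1$ says exactly that $i$ is a prefix position, so after intersecting with the index range of the $A_{k+1}$ definition one obtains a contiguous initial segment of size $\min\{k-1,n-k\}$. For $B_{k+1}$, the clause $\sigma(i)\le i$ fails immediately on the prefix (where $\sigma(i)\ge k+1>i$), while on the tail it becomes $n+1-i\le i$, i.e., $i\ge(n+1)/2$; the companion clause $i+1\le\sigma^{-1}(i+1)=n-i$ simultaneously forces $i\le(n-1)/2$. These cannot both hold, so $|B_{k+1}(\sigma)|=0$.

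For $C_{k+1}$, any pair $(i,k')$ contributing to the set has $\sigma(i)=k'+1\le k$, so $k'+1$ sits in the tail and $i=\sigma^{-1}(k'+1)=n-k'$; the requirement $i<k'$ then reduces to $k'>n/2$, and the requirement $\sigma(k')>k'+1$ splits cleanly according as $k'\le n-k$ (prefix subcase) or $k'>n-k$ (tail subcase). The tail subcase gives $n+1-k'>k'+1$, i.e., $k'<n/2$, directly contradicting $k'>n/2$, while the prefix subcase, once combined with $k'\le k-1$ and $k'>n/2$, squeezes $k'$ into an empty interval. Hence $|C_{k+1}(\sigma)|=0$, and plugging all three counts into Lemma~\ref{lem41} yields $\crs(\sigma^{(1,k+1)})-\crs(\sigma)=\min\{k-1,n-k\}$, as required. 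The main subtlety is the $C_{k+1}$ case analysis: one must carefully track where both the value $k'+1$ and the position $k'$ fall relative to the prefix/tail boundary, and verify that the resulting arithmetic inequalities really do collide with $k'>n/2$ in every configuration rather than leaving a small admissible window.
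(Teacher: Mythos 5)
Your proof takes the same route as the paper's: apply Lemma~\ref{lem41} with $j=k+1$ and evaluate the three cardinalities from the rigid prefix/tail structure of $\sigma\in S_n^{[k]}(\T)$. Your final counts $|A_{k+1}(\sigma)|=\min\{k-1,n-k\}$ and $|B_{k+1}(\sigma)|=|C_{k+1}(\sigma)|=0$ are exactly the ones that make the corollary true, and your treatment of the tail positions in the $B_{k+1}$ and $C_{k+1}$ analyses is actually more complete than the paper's, which only argues from the prefix inequality $\sigma(i)\ge k+1$ and moreover ends with the slip $\min\{k-1,n-1-k\}$, inconsistent with both its own set computation and the statement being proved.

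There is, however, a discrepancy you should not leave silent: your counts do not follow from the definitions of $A_j$ and $C_j$ as literally printed before Lemma~\ref{lem41}. Literally, $A_{k+1}(\sigma)=\{i<k+1:\sigma(i)\ge k+1\}=\{1,\dots,\min\{k,n-k\}\}$ has cardinality $\min\{k,n-k\}$, not $\min\{k-1,n-k\}$; the extra index $i=k$ (present whenever $k\le n-k$) contributes no crossing of $\sigma^{(1,k+1)}$, because the pair $(1,1+k)$ fails the requirement $1+k<\sigma^{(1,k+1)}(1)=k+1$ --- so the definition of $A_j$ ought to read $i+1<j$ rather than $i<j$. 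Likewise, your step ``$\sigma(i)=k'+1\le k$'' silently discards the value $k'=k$, which the printed condition ``$k'+1\le j$'' allows; with the literal definition $C_{k+1}(\sigma)$ need not be empty (for $\sigma=34521\in S_5^{[2]}(\T)$ the pair $(1,2)$ satisfies $1<2<\sigma(1)=3=2+1<\sigma(2)=4$ and $3\le j=3$), and the condition in $C_j$ ought to read $k'+1<j$, since a crossing $(i,k')$ of $\sigma$ with $\sigma(i)=k'+1=j$ in fact survives the insertion (the value $j$ gets incremented). In short, your computation is the correct one, but only after these two off-by-one corrections to Lemma~\ref{lem41} are made explicit; as written, the $A_{k+1}$ step is an unexplained jump from $\min\{k,n-k\}$ to $\min\{k-1,n-k\}$, and the claim $C_{k+1}(\sigma)=\emptyset$ is false for the printed definition. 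State the corrected forms of $A_j$ and $C_j$ (or re-derive the change in $\crs$ directly for the insertion at position $1$) and the argument closes cleanly.
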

	\begin{proof}
		Since every $\sigma \in S_{n}^{[k]}(\T)$ can be written as $\sigma =\pi|k\ldots 2 1$ such that $\pi$ is a \T-avoiding permutation  of $\{k+1,\ldots,n-1,n\}$, we have  $\sigma(i)\geq k+1$ for all $1\leq i\leq n-k$.
		Consequently, we  have $B_{k+1}(\sigma)=C_{k+1}(\sigma)=\emptyset$,   $A_{k+1}(\sigma)=\{1,2,\ldots, k-1\}$ if $k\leq \frac{n}{2}$ and $A_{k+1}(\sigma)=\{1,2,\ldots, n-k\}$ if $k> \frac{n}{2}$. In other word, we have
		$$|B_{k+1}(\sigma)|=|C_{k+1}(\sigma)|=0  \text{ and  }
		|A_{k+1}(\sigma)|=\begin{cases}
		k-1 & \text{ if }\ \ \ k\leq \frac{n}{2}\\
		n-1-k & \text{ if } \ \ \ k> \frac{n}{2}
		\end{cases} ~~\text{ for all } k.$$
		That means $|A_{k+1}(\sigma)|=\min\{k-1,n-1-k\}$.
		So, using lemma \ref{lem41}, we obtain our corollary.
	\end{proof}
	Let us define the \textit{skew sum} of two given permutations $\alpha$ and $\beta$ as $\alpha \ominus \beta:=\alpha^{+|\beta|}.\beta$. For example, we have $312\ominus 1342=7561342$. Using this last corollary, we can prove the following proposition which is the main tool to prove theorem \ref{main2}.
\begin{proposition}\label{prop42} Let $\T=\{213,312\}$.
	For all integers $n,k\geq 1$, we have 
	\begin{equation*}\label{eq:prop42}
	F_{n}^k(\T;q)=q^{\min\{k-1,n-1-k\}}F_{n-1}^k(\T;q)+F_{n}^{k+1}(\T;q).
	\end{equation*}
\end{proposition}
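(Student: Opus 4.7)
The plan is to partition $S_n^{[k]}(\T)$ according to the position of the value $k+1$ and to match the two pieces to the two summands of the recurrence. Fix $\T=\{213,312\}$. The structural observation already exploited at the start of Section~\ref{sec4} is that every $\T$-avoiding permutation begins or ends with its smallest entry. Applied to the prefix $\pi=\sigma(1)\sigma(2)\cdots\sigma(n-k)$ of $\sigma\in S_n^{[k]}(\T)$, which is a $\T$-avoiding permutation of $\{k+1,\ldots,n\}$, this forces $k+1$ to sit either at position $n-k$, equivalently $\sigma\in S_n^{[k+1]}(\T)$, or at position $1$, equivalently $\sigma(1)=k+1$. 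I would therefore split
\begin{equation*}
F_n^k(\T;q)=F_n^{k+1}(\T;q)+\sum_{\substack{\sigma\in S_n^{[k]}(\T)\\ \sigma(1)=k+1}}q^{\crs(\sigma)}.
\end{equation*}

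Next I would handle the residual sum via a crossing-shifting bijection $\phi:S_{n-1}^{[k]}(\T)\to\{\sigma\in S_n^{[k]}(\T):\sigma(1)=k+1\}$ defined by $\phi(\sigma)=\sigma^{(1,k+1)}$. This map leaves the trailing block $k(k-1)\cdots 21$ untouched, sets the first entry equal to $k+1$, and shifts all larger entries up by one, so the image lies in $S_n^{[k]}$ with first entry $k+1$. To confirm $\T$-avoidance I would note that the entries at positions $\geq 2$ carry the same relative pattern as the corresponding entries of $\sigma$, so no new $213$ or $312$ occurrence can arise among them; moreover any occurrence anchored at position $1$ would require either two entries smaller than $k+1$ to appear in increasing order (impossible in the decreasing tail $k(k-1)\cdots 21$) or an entry smaller than $k+1$ to precede an entry larger than $k+1$ (impossible since the small entries occupy only the last $k$ positions). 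The inverse deletes the leading $k+1$ and decrements every value $\geq k+2$, so $\phi$ is a bijection.

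Finally, Corollary~\ref{cor41} applied to $\sigma\in S_{n-1}^{[k]}(\T)$ yields $\crs(\phi(\sigma))=\crs(\sigma)+\min\{k-1,n-1-k\}$, hence
\begin{equation*}
\sum_{\substack{\sigma\in S_n^{[k]}(\T)\\ \sigma(1)=k+1}}q^{\crs(\sigma)}=q^{\min\{k-1,n-1-k\}}F_{n-1}^k(\T;q),
\end{equation*}
and substituting into the first decomposition gives the claimed recurrence. The main obstacle I expect is the careful verification that $\phi$ is a bijection onto exactly $\{\sigma\in S_n^{[k]}(\T):\sigma(1)=k+1\}$, in particular the pattern-avoidance check at position $1$; once this is settled the crossing identity is handed to us by Corollary~\ref{cor41} and the rest is formal bookkeeping.
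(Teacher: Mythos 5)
Your proof follows essentially the same route as the paper's: the same split of $S_n^{[k]}(\T)$ into $S_n^{[k+1]}(\T)$ and the permutations with first entry $k+1$ (which the paper writes as $S_{n,*}^{[k]}(\T)=\{\sigma\ominus k\cdots 21 : \sigma\in S_{n-k}^{n-k}(\T)\}$), the same bijection $\sigma\mapsto\sigma^{(1,k+1)}$ from $S_{n-1}^{[k]}(\T)$, and the same appeal to Corollary~\ref{cor41}, with your explicit check that the image stays in the avoidance class being a detail the paper omits. The only caveat, inherited from the paper rather than introduced by you, is that the two pieces of the partition coincide when $n=k+1$, so the decomposition (and hence the recurrence) really requires $n\geq k+2$.
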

\begin{proof}
	Let $\T=\{213,312\}$. Firstly, since 
	\begin{equation*}
	S_{n}^{[k]}(\T)=\{\sigma\ominus k\ldots 21/\sigma \in S_{n-k}^{n-k}(\T)\}\cup \{\sigma\ominus k\ldots 21/\sigma \in S_{n-k}^1(\T) \}
	\end{equation*}
	then we have  $
	S_{n}^{[k]}(\T)= S_{n,*}^{[k]}(\T)\cup S_{n}^{[k+1]}(\T)$ where $S_{n,*}^{[k]}(\T)=\{\sigma\ominus k\ldots 21/\sigma \in S_{n-k}^{n-k}(\T)\}$.
	In terms of generating function, we get 
	\begin{equation}\label{eqn3x}
	\sum_{\sigma \in S_n^{[k]}(\T)}q^{\crs(\sigma)}= \sum_{\sigma \in S_{n,*}^{[k]}(\T)}q^{\crs(\sigma)}+ \sum_{\sigma \in S_{n}^{[k+1]}(\T)}q^{\crs(\sigma)}.
	\end{equation} 
	Secondly, the map from  $S_{n-1}^{[k]}(213,312)$ to $S_{n,*}^{[k]}(213,312)$  which sends $\sigma$ to $\sigma^{(1,k+1)}$ is well defined and bijective. Using corollary \ref{cor41}, we obtain
	\begin{equation}\label{eqn3y}
	\sum_{\sigma \in S_{n,*}^{[k]}(\T)}q^{\crs(\sigma)}=q^{\min\{k-1,n-1-k\}}\times F_{n-1}^k(\T;q) .
	\end{equation}
	So, combining identities (\ref{eqn3x}) and (\ref{eqn3y}), we complete the proof of proposition \ref{prop42}.
\end{proof}

\hspace{-0.6cm}\textbf{Proof of theorem \ref{main2}}. Let $\T=\{213,312\}$. 
It is obvious that $F_{n}^n(\T;q)=F_{n}^{n-1}(\T;q)=1$ since $S_{n}^{[n]}(\T)=S_{n}^{[n-1]}(\T)=\{n\ldots 21\}$. Combining propositions \ref{prop41} and \ref{prop42}, then applying relation (\ref{eq41}), we get the following ones
	\begin{equation} \label{eq42}
	\begin{cases}
	F_{n}^{n-1}(\T;q)&=F_{n}^{n-1}(\T;q)=1.\\
	F_{n}^{k}(\T;q)&=q^{\min\{k-1,n-1-k\}} F_{n}^k(\T;q)+F_{n}^{k+1}(\T;q) \text{ for all $k\geq 1$}.\\
	F_{n}(\T;q)&=F_{n-1}(\T;q)+F_{n}^{1}(\T;q).
	\end{cases}
	\end{equation}
	We can show easily by induction on $n$ that the two relationships  (\ref{q-tableau}) and (\ref{eq42})  are the same. So, we have $F_{n}(213,312;q)=R_{n}^0(q)$ and $F_{n}^{k}(\T;q)=R_{n}^k(q)$ for $k\geq 1$. Using the fact that $\{132,312\}=\rci(\{213,312\})$, we complete the proof of theorem \ref{main2}.
	
	\begin{corollary} 
		The number of (213,312)- or (132,312)-avoiding noncrossing permutations of  $[n]$ is the n-th central polygonal number $\binom{n}{2}+1$.
	\end{corollary}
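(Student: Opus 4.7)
The plan is to read off the number of noncrossing $\T$-avoiding permutations as the constant term of the polynomial $R_n^0(q)$ (equivalently, $R_n^0(0)$), since Theorem \ref{main2} gives $\sum_{\sigma\in S_n(\T)}q^{\crs(\sigma)}=R_n^0(q)$ for $\T\in\{\{213,312\},\{132,312\}\}$, and a noncrossing permutation is one with $\crs(\sigma)=0$. So the entire task reduces to evaluating the defining recurrence (\ref{q-tableau}) at $q=0$.

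First I would observe that in the middle case of (\ref{q-tableau}) the factor $q^{\min\{k-1,n-1-k\}}$ vanishes at $q=0$ precisely when $\min\{k-1,n-1-k\}\geq 1$, i.e.\ when $2\leq k\leq n-2$. Setting $a_n^k:=R_n^k(0)$, the recurrence specializes to
\begin{equation*}
a_n^n=a_n^{n-1}=1,\qquad a_n^k=a_n^{k+1}\text{ for }2\leq k\leq n-2,\qquad a_n^1=a_{n-1}^1+a_n^2,\qquad a_n^0=a_{n-1}^0+a_n^1.
\end{equation*}
A straightforward downward induction on $k$ then yields $a_n^k=1$ for every $2\leq k\leq n-1$ and $n\geq 3$.

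Next I would feed this into the formula for $a_n^1$: since $a_n^2=1$ for $n\geq 3$ and $a_2^1=1$, the recurrence becomes $a_n^1=a_{n-1}^1+1$, which telescopes to $a_n^1=n-1$ for all $n\geq 2$. Plugging this into $a_n^0=a_{n-1}^0+a_n^1$ and iterating from the initial value $a_1^0=1$ gives
\begin{equation*}
a_n^0=1+\sum_{k=2}^{n}(k-1)=1+\binom{n}{2},
\end{equation*}
which is exactly the $n$-th central polygonal number. Since the Wilf-type identity $\{132,312\}=\rci(\{213,312\})$ combined with the $\crs$-preserving property of $\rci$ has already been used in the proof of Theorem \ref{main2} to transfer the generating function from one family to the other, the same count applies to both pairs of patterns, completing the corollary.

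There is no real obstacle: the only thing to be careful about is the boundary, in particular checking that $a_n^{n-1}=1$ supplies the base case for the induction $a_n^k=a_n^{k+1}$ and that the recurrence for $a_n^0$ is only applied for $n\geq 2$ (for $n=1$ the boundary clause $R_1^0=R_1^1=1$ is used instead), which is consistent with $1+\binom{1}{2}=1$.
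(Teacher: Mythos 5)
Your proposal is correct and follows essentially the same route as the paper, which simply asserts that $F_n(213,312;0)=R_n^0(0)=\binom{n}{2}+1$ follows by induction on $n$; you have merely filled in the details of specializing the recurrence (\ref{q-tableau}) at $q=0$ and telescoping. The intermediate values $a_n^1=n-1$ and $a_n^k=1$ for $2\leq k\leq n-1$ check out against Table \ref{table:qtable}, so nothing is missing.
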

	\begin{proof} It is easy to prove it by induction on $n$ that $F_{n}(213,312;0)=R_n^0(0)=\binom{n}{2}+1$. 
	\end{proof}
	\begin{theorem} \label{thm42}
		Let $\T$ be one of the pairs  $\{213,231\}$ and $\{132,231\}$.
		For all integer $n$,	we have 
		\begin{equation*}\label{eq3x}
		\sum_{\sigma \in S_n(\T)}q^{\crs(\sigma)}=R_{n+1}^1(q) 
		\end{equation*}
	\end{theorem}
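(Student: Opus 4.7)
The plan is to produce a $\crs$-preserving bijection between $S_n(213,231)$ and $S_{n+1}^{[1]}(213,312)$, after which Theorem \ref{main2} with $\T=\{213,312\}$ and $k=1$ identifies the latter polynomial with $R_{n+1}^1(q)$ and gives the result directly. The natural candidate is the restriction of $\phi_1:S_n\to S_{n+1}^1$ from Section \ref{sec2}, which sends $\sigma$ to $(\sigma^{-1})^{(n+1,1)}$.

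First I would verify that this restricted map lands in $S_{n+1}^{[1]}(213,312)$ when applied to $\sigma\in S_n(213,231)$. Since $213^{-1}=213$ and $231^{-1}=312$, inversion already provides a bijection $S_n(213,231)\to S_n(213,312)$; it remains to analyze the $(n+1,1)$ operation, which shifts every value up by one and appends $1$ in the last position. Any new occurrence of $213$ or $312$ in the enlarged permutation would have to use this final $1$, and because $1$ is the smallest value it could only play the role of the letter $1$ in the pattern. But in both $213$ and $312$ the letter $1$ occurs in the middle, so it must be followed by a strictly larger letter, whereas nothing follows the trailing $1$ here. Hence no new occurrence is produced, and occurrences contained in the first $n$ positions correspond bijectively to occurrences in $\sigma^{-1}$. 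The inverse map is also clear: strip the trailing $1$, shift down, and invert.

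Once the bijection is in place, Proposition \ref{prop1} guarantees that $\phi_1$ is $\crs$-preserving, so
$$\sum_{\sigma \in S_n(213,231)} q^{\crs(\sigma)} = \sum_{\pi \in S_{n+1}^{[1]}(213,312)} q^{\crs(\pi)} = R_{n+1}^1(q),$$
using Theorem \ref{main2}. For the remaining case $\T=\{132,231\}$, I would invoke the identity $\rci(\{213,231\})=\{132,231\}$ recalled at the top of Section \ref{sec4} together with the $\crs$-preserving property of $\rci$ noted in Section \ref{sec2} to reduce to the case just treated.

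The main (and essentially only) obstacle is the pattern-avoidance verification for $\phi_1(\sigma)$; it is here that the specific pair $\{213,312\}$ in the image is essential, because appending $1$ would cause trouble for patterns whose minimum letter is final (e.g.\ $321$ or $231$). For $213$ and $312$ the minimum appears in the middle, so the argument goes through cleanly, and the rest of the proof is a direct combination of Proposition \ref{prop1} and Theorem \ref{main2}.
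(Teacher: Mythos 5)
Your proposal is correct and follows essentially the same route as the paper: restrict $\phi_1$ to $S_n(213,231)$, note it is a $\crs$-preserving bijection onto $S_{n+1}^1(213,312)$, invoke Theorem \ref{main2}, and handle $\{132,231\}$ via $\rci$. The only difference is that you spell out the pattern-avoidance verification (via $213^{-1}=213$, $231^{-1}=312$, and the position of the minimal letter), which the paper simply asserts.
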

	\begin{proof}
		The restriction of $\phi_1$ (or $\psi_1$) on  $S_{n}(213,231)$ is a bijection from  $S_{n}(213,231)$ to $S_{n+1}^1(213,312)$. Translating this fact into generating function, we have
		\begin{equation*}
		\sum_{\sigma \in S_n(213,231)}q^{\crs(\sigma)}=\sum_{\sigma \in S_{n+1}^1(213,312)}q^{\crs(\sigma)}=R_{n+1}^1(q).
		\end{equation*}
		We complete the proof of theorem \ref{thm42} using the fact that $\{132,231\}=_{\crs}\{213,231\}$ since $\{132,231\}=\rci(\{213,231\})$.
	\end{proof}
\section{Excedances and crossings over $S_n(321,231)$}

Let us first denote by $B_{n,k}:=\{\sigma \in S_n/\max\{i-\sigma(i)\}\leq k\}$ the subset of $S_n$  known as the set of permutations with maximum drop less than $k$ and $B_{n,k}(y,q)=\sum_{\sigma \in B_{n,k}}y^{\des(\sigma)}q^{\inv(\sigma)}$ where 
$\des(\sigma):=|\{i/\sigma(i)>\sigma(i+1)\}|$ and 
$\inv(\sigma):=|\{(i,j)/i<j,  \sigma(i)>\sigma(i)\}|$  are respectively the number of descents and inversions of $\sigma$. Explicit formula for $B_{n,k}(q)$ was given by Chung et al. in \cite{Chung} for $k\geq 1$. 
Here, we are interested on the case $k=1$ throughout  the following proposition.
\begin{proposition}\label{prop51}
	For all integer $n$, we have  $S_n(321,231)=B_{n,1}$.
\end{proposition}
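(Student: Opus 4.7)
The plan is to prove the equality $S_n(321,231)=B_{n,1}$ by establishing the two inclusions directly via short pigeonhole arguments, the two directions being essentially dual.

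For $S_n(321,231)\subseteq B_{n,1}$, I would argue by contradiction. Suppose $\sigma$ avoids both patterns but some position $p$ satisfies $\sigma(p)\leq p-2$. Among the $p-1$ values appearing at positions $1,\ldots,p-1$, at most $\sigma(p)-1$ of them can lie in $\{1,\ldots,\sigma(p)-1\}$, so at least $(p-1)-(\sigma(p)-1)=p-\sigma(p)\geq 2$ of them exceed $\sigma(p)$. Fixing any two such indices $q_1<q_2<p$, the subsequence $\sigma(q_1)\sigma(q_2)\sigma(p)$ is order-isomorphic to $321$ when $\sigma(q_1)>\sigma(q_2)$ and to $231$ when $\sigma(q_1)<\sigma(q_2)$; either way, $\sigma$ fails to avoid one of the forbidden patterns, which is the desired contradiction.

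For the reverse inclusion $B_{n,1}\subseteq S_n(321,231)$, assume $\sigma(i)\geq i-1$ for every $i$. If $\sigma$ had a $321$-occurrence at $i<j<k$, then $\sigma(k)\geq k-1$ combined with $\sigma(i)>\sigma(j)>\sigma(k)$ would force $\sigma(i),\sigma(j)\geq k$, so positions $1,\ldots,k-1$ would carry at least two values $\geq k$; consequently at least two values of $\{1,\ldots,k-1\}$ would sit at positions $\geq k$, and the smaller of them, say $v\leq k-2$ at some position $p\geq k$, would satisfy $\sigma(p)=v\leq k-2\leq p-2$, violating $\sigma(p)\geq p-1$. The $231$ case is analogous: from a $231$-occurrence at $i<j<k$ one derives $\sigma(i),\sigma(j)>j$ using $\sigma(k)\geq k-1\geq j$, forces at least two values of $\{1,\ldots,j\}$ into positions $>j$, and extracts the same contradiction.

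The only delicate point is the pigeonhole step guaranteeing \emph{two} values on the wrong side of the threshold rather than just one; this is precisely what allows both forbidden patterns to emerge from a single counting statement (and dually, what lets a single violation of $\sigma(p)\geq p-1$ force one of the two patterns to appear). After that, the argument is routine bookkeeping, so I do not anticipate any real obstacle.
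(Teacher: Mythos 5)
Your proof is correct, but it takes a genuinely different route from the paper's. The paper argues via the structural decomposition of $(321,231)$-avoiding permutations: every such $\sigma$ is either $1\oplus\pi$ or $j12\ldots(j-1)\oplus\pi$ with $\pi$ again avoiding both patterns, which gives $S_n(321,231)\subseteq B_{n,1}$ at once; for the converse it shows that any $\sigma\in B_{n,1}$ admits exactly the same decomposition with $\pi\in B_{n-j,1}$, and closes the argument by induction on $n$. Your route dispenses with both the decomposition and the induction: each inclusion follows from a single pigeonhole count (a drop $\sigma(p)\le p-2$ forces at least two larger values to the left of $p$, hence an occurrence of $321$ or $231$ ending at $p$; conversely, an occurrence of either pattern forces two values $\ge k$ (resp.\ $>j$) into the first $k-1$ (resp.\ $j$) positions, hence some value $v\le k-2$ into a position $p\ge k$, i.e.\ a drop of size at least $2$), and your case analysis is complete and correct. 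What each approach buys: yours is shorter and entirely self-contained as a proof of the set equality; the paper's decomposition is not wasted, however, since it is reused immediately afterwards to derive the recurrence for the joint distribution of $\exc$ and $\crs$ over $S_n(321,231)$, which your argument does not provide.
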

\begin{proof}
	Know that each  $(321,231)$-avoiding permutation $\sigma$ can be written as $\sigma=1\oplus\pi$ with $\pi \in S_{n-1}(321,231)$ or $\sigma=j12 \ldots (j-1)\oplus\pi$ with $\pi \in S_{n-j}(321,231)$ where $1<j\leq n$.
	So, it is obvious  that we have  $S_n(321,231)\subset B_{n,1}$. 
	
	Let $\sigma \in B_{n,1}$. The following properties hold.
	\begin{itemize}
		\item[-]  If $\sigma(1)=1$, we have $\sigma=1\oplus \pi$ such that $\pi \in B_{n-1,1}$.
		\item[-] If $\sigma(1)=j\neq 1$, we have $\sigma=j12\ldots(j-1)\oplus \pi$ such that $\pi \in R_{n-j,1}$. Indeed, if $\sigma(2)\neq 1$ then there exists $i\geq 3$ such that $\sigma(i)=1$ and $i-1>2$. Contradiction since $\max\{i-\sigma(i)\}\leq 1$. By the same way, we can show that $\sigma(i)=i-1$ for all $i\in \{3,\ldots,j\}$. We get consequently $\sigma(1)\sigma(1)\ldots \sigma(j)=j12\ldots(j-1)$. Thus we have $\sigma=j12\ldots(j-1)\oplus \pi$ with $\pi \in B_{n-j,1}$. 
	\end{itemize} 
	Using these two properties, we can show easily by induction on $n$ that 	$B_{n,1}\subset S_n(321,231)$. So, we have $S_n(321,231)=B_{n,1}$.
\end{proof}
The following joint distribution of the number of descents and inversions was computed by Chung et al. in \cite{Chung}
\begin{equation}\label{eq:chung}
\sum_{\sigma\in  S(321,213)}y^{\des(\sigma)}q^{\inv(\sigma)}z^{|\sigma|}=\frac{1-qz}{1-(1+q)z-q(y-1)z^2}.
\end{equation}
The distribution of the number of inversions given by Dokos et al. \cite{Dokos} can be recovered from (\ref{eq:chung}).
\begin{equation}\label{eq:dokos}
\sum_{\sigma\in  S_n(321,231)}q^{\inv(\sigma)}=(1+q)^{n-1}.
\end{equation}

 Because of its connection to the results of  Dokos et al. and Chung et al., we will compute in the rest of this section the joint distribution of the statistics $\exc$ and $\crs$ over the set of (231,321)-avoiding permutations, where $\exc(\sigma):=|\{i/ \sigma(i)>\sigma(i+1)\}|$ is the number of excedances of $\sigma$. 
\begin{theorem}
	We have 
	\begin{eqnarray}\label{eq51}
	\sum_{\sigma \in S(231,321)} y^{\exc(\sigma)}q^{\crs(\sigma)} z^{|\sigma|} &  = &  \frac{1-qz}{1-(1+q)z-(y-q)z^2}.
	\end{eqnarray}
\end{theorem}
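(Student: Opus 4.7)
The plan is to exploit the block decomposition of $(321,231)$-avoiding permutations established in the proof of Proposition \ref{prop51}: every nonempty $\sigma\in S(321,231)$ is either of the form $\sigma=1\oplus\pi$, or of the form $\sigma=j12\cdots(j-1)\oplus\pi$ with $j\geq 2$, where in both cases $\pi\in S(321,231)$. Setting
$$G(y,q,z)=\sum_{\sigma\in S(231,321)} y^{\exc(\sigma)}q^{\crs(\sigma)}z^{|\sigma|},$$
I would track the two statistics across the two cases and turn the decomposition into a functional equation for $G$.

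First I would handle the easy case $\sigma=1\oplus\pi$: the leading letter $1$ is a fixed point, it creates no descent with its successor, and any arc incident to position $1$ cannot satisfy either crossing inequality, so $\exc(\sigma)=\exc(\pi)$ and $\crs(\sigma)=\crs(\pi)$. This contributes $zG$ to $G$. For the second case $\sigma=j12\cdots(j-1)\oplus\pi$ with $j\geq 2$, I would check that the prepended block contributes exactly $1$ to $\exc$ (the unique descent $\sigma(1)=j>1=\sigma(2)$, noting that $\sigma(i)=i-1<i$ for $2\leq i\leq j$), and contributes exactly $j-2$ to $\crs$. The crossing count reduces to a short case split: upper crossings involving position $1$ or any position $i\in\{2,\dots,j\}$ are ruled out by the inequalities $\sigma(i)=i-1<i$; lower crossings within the block occur exactly for consecutive pairs $(i,i+1)$ with $2\leq i\leq j-1$, giving $j-2$ of them; and mixed crossings between the block and $\pi^{+j}$ are excluded because $\sigma(k)\geq j+1>i$ for every $k>j$ and $i\leq j$. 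Consequently $\exc(\sigma)=\exc(\pi)+1$ and $\crs(\sigma)=\crs(\pi)+(j-2)$, contributing $y\,q^{j-2}z^j\,G$.

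Summing over the two cases plus the empty permutation yields the functional equation
$$G=1+zG+\sum_{j\geq 2} y\,q^{j-2}z^{j}\,G
=1+zG+\frac{yz^{2}}{1-qz}\,G,$$
which solves to
$$G=\frac{1-qz}{(1-z)(1-qz)-yz^{2}}=\frac{1-qz}{1-(1+q)z-(y-q)z^{2}},$$
matching the claimed formula. The main obstacle is the second step: carefully verifying that no mixed crossings appear between the prepended block $j12\cdots(j-1)$ and the shifted tail $\pi^{+j}$, and pinning down the exact number ($j-2$) of intra-block lower crossings. This is routine but deserves the little case analysis sketched above; everything else (the excedance count and the algebra to solve the functional equation) is immediate.
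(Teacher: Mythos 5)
Your proposal is correct and follows essentially the same route as the paper: the same block decomposition $\sigma=1\oplus\pi$ versus $\sigma=j12\cdots(j-1)\oplus\pi$, with the same bookkeeping $\exc(\sigma)=\exc(\pi)+1$ and $\crs(\sigma)=\crs(\pi)+(j-2)$ in the second case. The only cosmetic difference is that you sum the geometric series in $j$ directly inside the generating function, whereas the paper first extracts the three-term recurrence $F_{n}=(1+q)F_{n-1}+(y-q)F_{n-2}$ and then passes to the functional equation.
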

\begin{proof}
We will denote by $F_n(\T;y,q)=\sum_{\sigma \in S_n(\T)}y^{\exc(\sigma)}q^{\crs(\sigma)}$ the joint distribution of the two statistics $\exc$ and $\crs$ over the set $S_n(\T)$ for any subset of permutations $\T$. 	Let us fix $\T=\{321,231\}$. Let $\sigma$ be a $\T$-avoiding permutation. According to the structure of $\sigma$ described in the proof of proposition \ref{prop41}, we have
	\begin{enumerate}
		\item $(\exc(\sigma),\crs(\sigma))=(\exc(\pi),\crs(\pi))$ \ if \ $\sigma=1\oplus \pi$,
		\item $(\exc(\sigma),\crs(\sigma))=(1+\exc(\pi),j-2+\crs(\pi))$ if  $\sigma=j12 \ldots (j-1)\oplus\pi$ and $j\geq 2$.
	\end{enumerate}
	In terms of generating function, we obtain		
	\begin{eqnarray*}
		\sum_{1\oplus\pi=\sigma \in S_n(\T)}y^{\exc(\sigma)}q^{\crs(\sigma)}&=&F_{n-1}(\T;y,q) \text{ and  }\\
		\sum_{j12 \ldots (j-1)\oplus\pi=\sigma \in S_n(\T)}y^{\exc(\sigma)}q^{\crs(\sigma)}&=&yq^{j-2} F_{n-j}(\T;y,q) \text{ for all $j\geq 2$}.\\
	\end{eqnarray*}
	So we get 
	\begin{eqnarray*}
		F_{n}(\T;y,q) &  = &  	F_{n-1}(\T;y,q)+\sum_{j=2}^{n}yq^{j-2} F_{n-j}(\T;y,q)
	\end{eqnarray*}
	When we compute $F_{n}(\T;y,q)-qF_{n-1}(\T;y,q)$, we get
	$$F_{n}(\T;y,q)-qF_{n-1}(\T;y,q)=F_{n-1}(\T; y,q)+(y-q)F_{n-2}(\T;y,q)$$	
	Thus, we get the following recurrence relation
	\begin{equation}\label{eq:eq51x}
	F_{n}(\T;y,q)=(1+q)F_{n-1}(\T;y,q)+(y-q)F_{n-2}(\T;y,q)
	\end{equation}	
	If we denote by $F(\T;y,q,z):=1+\sum_{n\geq 1}F_{n}(\T;y,q)z^n$ the generating function of $F_{n}(\T;y,q)$, we obtain the following functional equation when we use (\ref{eq:eq51x}). 
	\begin{eqnarray*}
		F(\T;y,q,z)=1+z+(1+q)z(F(\T;y,q,z)-1)+(y-q)z^2F(\T;y,q,z).
	\end{eqnarray*}
	Solving it for $F(\T;y,q,z)$, we finally get the following identity
	\begin{eqnarray*}
	F(\T;y,q,z) &  = &  \frac{1-qz}{1-(1+q)z-(y-q)z^2}.
	\end{eqnarray*}	
 which is equivalent with (\ref{eq51}).
\end{proof}		
Observe that, since $\inv(\sigma)=\exc(\sigma)+\crs(\sigma)$ for all permutation $\sigma \in S_n(321,231)$ (see \cite{ARandr,Rakot} ), we then recover from (\ref{eq51}) the result of \cite{Dokos, Chung} about the distribution of the number of inversions over the set $S_n(321,231)$. We also get an unexpected result which is a refinement of the following identity that was first proved bijectively by Foata in \cite{Foata}
	\begin{equation}\label{eqfoata}
\sum_{\sigma\in S_n}y^{\des(\sigma)}=\sum_{\sigma \in S_n}y^{\exc(\sigma)} \text{ for all integer $n$}.
\end{equation}
The two statistics $\exc$ and $\des$ are known as \textit{eulerian statistics}.  Here, we get the following corollary which may be a refinement of (\ref{eqfoata}).
\begin{corollary}
	For all integer $n$, we have
	\begin{equation*}
	\sum_{\sigma\in S_n(231,321)}y^{\des(\sigma)}=\sum_{\sigma \in S_n(231,321)}y^{\exc(\sigma)}=\sum_{k\geq 0}\binom{n}{2k}y^k.
	\end{equation*}
\end{corollary}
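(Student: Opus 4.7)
The plan is to show that both generating functions collapse, at $q=1$, to the same rational function $\frac{1-z}{1-2z-(y-1)z^2}$, and then to identify this rational function as the ordinary generating function of $a_n(y):=\sum_{k\geq 0}\binom{n}{2k}y^k$.

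First I would specialize the identity (\ref{eq51}) of the previous theorem at $q=1$ to get
\begin{equation*}
\sum_{n\geq 0} z^n \sum_{\sigma\in S_n(231,321)}y^{\exc(\sigma)} \;=\; \frac{1-z}{1-2z-(y-1)z^2}.
\end{equation*}
For the descent side, Proposition \ref{prop51} identifies $S_n(321,231)$ with $B_{n,1}$, so specializing Chung et al.'s joint distribution (\ref{eq:chung}) at $q=1$ yields the same right-hand side for $\sum y^{\des(\sigma)}$. This immediately gives the first equality of the corollary.

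Next I would identify $[z^n]\frac{1-z}{1-2z-(y-1)z^2}$ with $\sum_k\binom{n}{2k}y^k$. The cleanest route is to factor the denominator as $(1-(1+\sqrt{y})z)(1-(1-\sqrt{y})z)$ and apply partial fractions to obtain
\begin{equation*}
\frac{1-z}{1-2z-(y-1)z^2} \;=\; \frac{1}{2}\!\left(\frac{1}{1-(1+\sqrt{y})z} + \frac{1}{1-(1-\sqrt{y})z}\right).
\end{equation*}
Expanding each geometric series and extracting $[z^n]$ gives $\frac{(1+\sqrt{y})^n+(1-\sqrt{y})^n}{2}$, which by the binomial theorem (odd powers of $\sqrt{y}$ cancel) equals $\sum_{k\geq 0}\binom{n}{2k}y^k$, as desired.

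An equivalent and perhaps more transparent alternative is a short induction: set $a_n(y):=\sum_k\binom{n}{2k}y^k$, apply Pascal's rule twice to derive $a_n(y)=2a_{n-1}(y)+(y-1)a_{n-2}(y)$ with $a_0=a_1=1$, and compare with the recurrence (\ref{eq:eq51x}) specialized at $q=1$, which produces the same two-term recurrence with the same initial data. In either approach the argument is purely algebraic; there is no genuine obstacle, the only care needed being the handling of the spurious $\sqrt{y}$ in the partial-fraction route (resolved by pairing the two roots), or a clean bookkeeping of Pascal's identity in the inductive route.
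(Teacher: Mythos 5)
Your proposal is correct, and its first half is exactly the paper's argument: the paper's proof of this corollary consists solely of the observation that setting $q=1$ in (\ref{eq:chung}) and in (\ref{eq51}) yields the same generating function $\frac{1-z}{1-2z-(y-1)z^2}$, which gives the equality of the $\des$ and $\exc$ distributions. What you add --- and what the paper actually omits --- is a proof of the second equality, namely that the coefficient of $z^n$ in this rational function is $\sum_{k\geq 0}\binom{n}{2k}y^k$. Both of your routes for this step check out: the partial-fraction decomposition with roots $1\pm\sqrt{y}$ correctly gives $\frac{(1+\sqrt{y})^n+(1-\sqrt{y})^n}{2}$, whose odd powers of $\sqrt{y}$ cancel; and the inductive route via the two-term recurrence $a_n=2a_{n-1}+(y-1)a_{n-2}$, matched against (\ref{eq:eq51x}) at $q=1$ with initial data $a_0=a_1=1$, is a clean alternative that avoids introducing $\sqrt{y}$ altogether. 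So your write-up is not merely equivalent to the paper's but strictly more complete.
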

\begin{proof}
	When we set $q=1$ in (\ref{eq:chung}) and $q=1$ in (\ref{eq51}), we obtain the same expression for $\sum_{\sigma\in  S(321,231)}q^{\des(\sigma)}z^{|\sigma|}$ and $\sum_{\sigma\in  S(321,231)}q^{\exc(\sigma)}z^{|\sigma|}$.
\end{proof}
\hspace{-0.7cm} We end this paper with the following enumeration result of (321,231)-avoiding noncrossing permutations.
\begin{corollary} The number of (231,321)-avoiding noncrossing permutations of $[n]$ is the n-th Fibonacci number $F_n$.	
\end{corollary}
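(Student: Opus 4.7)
The plan is to read the corollary off as a one-line specialization of the generating function proved in the previous theorem. Setting $y=1$ (to forget the excedance statistic) and $q=0$ (to isolate noncrossing permutations) in the identity
\[
\sum_{\sigma \in S(231,321)} y^{\exc(\sigma)}q^{\crs(\sigma)} z^{|\sigma|} = \frac{1-qz}{1-(1+q)z-(y-q)z^2}
\]
makes the right-hand side collapse to $\dfrac{1}{1-z-z^2}$, which is the classical generating function of the Fibonacci numbers. Extracting the coefficient of $z^n$ then produces $F_n$, modulo fixing a convention for the indexing of $F_n$.

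As a sanity check and an alternative route, I would give a direct combinatorial recurrence using the structural decomposition already recorded in the proof of Proposition~\ref{prop51}. Every $\sigma \in S_n(321,231)$ has the form $\sigma = 1 \oplus \pi$ or $\sigma = j12\cdots(j-1) \oplus \pi$ with $j \geq 2$ and $\pi$ itself $(321,231)$-avoiding. A quick inspection of the arc diagram shows that the prefix $j12\cdots(j-1)$ produces exactly $j-2$ lower crossings, namely the pairs $(a,a+1)$ for $2 \leq a \leq j-1$, and that no crossing can straddle the prefix and the suffix since values in the prefix are at most $j$ while values in the suffix exceed $j$. Hence $\sigma$ is noncrossing if and only if $j \in \{1,2\}$, which gives the two cases $\sigma = 1 \oplus \pi$ with $\pi \in S_{n-1}(321,231)$ noncrossing, and $\sigma = 21 \oplus \pi$ with $\pi \in S_{n-2}(321,231)$ noncrossing.

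Writing $a_n$ for the number of noncrossing $(321,231)$-avoiding permutations of $[n]$, the previous paragraph delivers $a_n = a_{n-1} + a_{n-2}$ with the base cases $a_1=1$ and $a_2=2$ (both $12$ and $21$ are noncrossing), which is the Fibonacci recurrence and therefore matches the specialization of the generating function.

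There is essentially no real obstacle here once the preceding theorem is in place; the only care needed is the short verification that the prefix $j12\cdots(j-1)$ contributes precisely $j-2$ crossings and that none of them mix with the suffix, and to align the Fibonacci indexing with the convention used in the statement.
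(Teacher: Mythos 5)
Your proposal is correct and follows essentially the same route as the paper: the paper's proof also sets $q=0$, $y=1$ in the generating function identity to obtain $1/(1-z-z^2)$, and likewise notes that the recurrence $F_n(\T;y,q)=(1+q)F_{n-1}(\T;y,q)+(y-q)F_{n-2}(\T;y,q)$ specializes to the Fibonacci recurrence. Your extra verification that the prefix $j12\cdots(j-1)$ contributes exactly $j-2$ crossings is a sound (and slightly more explicit) justification of a fact the paper already established in deriving that recurrence.
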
	
\begin{proof}
	If we set $q=0$ and $y=1$ in (\ref{eq51}), we obtain the generating function of the Fibonacci numbers. We also obtain the recurrence relation for the Fibonacci sequence $(F_n)$ when we use (\ref{eq:eq51x}). 
\end{proof}

\hspace{-0.58cm}\textbf{Acknowledgments}\\
We would like to thank Arthur Randrianarivony for helpful comments on this paper to get this version.

\end{document}